\documentclass[12pt]{amsart}
\usepackage{verbatim}
\usepackage{float}
\usepackage{array}
\usepackage{longtable}

\usepackage{footnotehyper}
\makesavenoteenv{table}
\makesavenoteenv{tabular}

\usepackage{amssymb}
\usepackage{nicematrix, tikz}
\usetikzlibrary{fit}
\usepackage{enumitem}

\newcommand{\lradG}{0.25}
\newcommand{\darkradE}{0.115}

\numberwithin{equation}{section}

\newtheorem{thm}[equation]{Theorem}
\newtheorem{prop}[equation]{Proposition}

\newtheorem{lem}[equation]{Lemma}
\newtheorem{cor}[equation]{Corollary}

\theoremstyle{definition}
\newtheorem{rem}[equation]{Remark}

\newtheorem{ntt}[equation]{}

\newcommand{\Mot}{\mathcal{M}}

\newcommand{\U}{\mathcal{U}}

\newcommand{\SB}{\mathop{\mathrm{SB}}}

\newcommand{\CH}{\mathop{\mathrm{CH}}\nolimits}

\newcommand{\Spin}{\operatorname{\mathrm{Spin}}}

\newcommand{\SU}{\operatorname{\mathrm{SU}}}

\newcommand{\E}{\mathrm{E}}
\newcommand{\ff}{\mathbb{F}}

\newcommand{\Ch}{\mathop{\mathrm{Ch}}\nolimits}

\newcommand{\cores}{\mathop{\mathrm{cor}}\nolimits}

\newcommand{\zz}{\mathbb{Z}}

\newcommand{\F}{\mathrm{F}}
\newcommand{\G}{\mathrm{G}}
\newcommand{\A}{\mathrm{A}}

\newcommand{\qq}{\mathbb{Q}}

\newcommand{\C}{\mathrm{C}}

\newcommand{\Spec}{\operatorname{Spec}}

\newcommand{\D}{\mathrm{D}}

\title[Motives, cohomological invariants and the Freudenthal magic square]{Motives, cohomological invariants and the Freudenthal magic square}

\keywords
{Linear algebraic groups, twisted flag varieties, cohomological invariants, motives.}
\subjclass[2010]{20G15, 14C15}

\author[Nikita Geldhauser]{Nikita Geldhauser}

\author[Alexander Henke]{Alexander Henke}

\author[Maksim Zhykhovich]{Maksim Zhykhovich}

\address{Geldhauser: Mathematisches Institut der Universit\"at M\"unchen, Germany}
\email{geldhauser@math.lmu.de}

\address{Henke: Independent Researcher, Wiesbaden, Germany}
\email{eniszden@freenet.de}

\address{Zhykhovich: Mathematisches Institut der Universit\"at M\"unchen, Germany}
\email{zhykhovich@math.lmu.de}

\thanks{The work of the first and of the third author is supported by the DFG research grants SE 1721/4-1 and ZH 918/2-1.}

\begin{document}

\begin{abstract}
We investigate cohomological invariants and motivic invariants of semisimple algebraic groups arising in the Freudenthal magic square. Besides, we show that if the Rost invariant of a strongly inner group of type $\E_7$ is a sum of at most two symbols modulo $2$, then it is isotropic over an odd degree field extension, and use this fact to give a different proof of a result of Petrov and Rigby. Moreover, we give a motivic interpretation of a result of Garibaldi and Petersson about a cohomological invariant of degree $5$ for certain groups of type $^2\E_6$ which detects their isotropy.
\end{abstract}

\maketitle

\newsavebox{\dEpic}
\savebox{\dEpic}(5,1){\begin{picture}(5, 1)
    \multiput(3,0.75)(1,0){2}{\circle*{\darkradE}}
    \multiput(3,0.25)(1,0){2}{\circle*{\darkradE}}
    \multiput(1,0.5)(1,0){2}{\circle*{\darkradE}}
    
    \put(1, 0.5){\line(1,0){1}}
    \put(3,0.75){\line(1,0){1}}
    \put(3, 0.25){\line(1,0){1}}
    
    \put(3,0.5){\oval(2,0.5)[l]}
    \end{picture}}

\section{Introduction}

The Freudenthal magic square was discovered in the 1950s independently by Jacques Tits and Hans Freudenthal. Since then, the topic has been the subject of numerous articles in algebra, geometry, and theoretical physics.

The Freudenthal magic square contains algebraic groups or Lie algebras given by the celebrated Tits construction (see \cite[\S41]{KMRT}).
In the present article we describe a new symmetry of the Freudenthal magic square, which is related to cohomological invariants of algebraic groups (see Table~\ref{tab1} in Section~\ref{sec8}).

The results of this article  emerge from the interplay of cohomological invariants, notably the Rost invariant and Tits algebras, of algebraic groups and the motivic $J$-invariant.

The idea to use cohomological invariants in the classification of algebraic groups goes back to Jean-Pierre Serre. In particular, Serre conjectured the existence of an invariant of degree $3$ for groups of type $\F_4$ and $\E_8$. This invariant was later constructed by Markus Rost for all $G$-torsors, where $G$ is a simple simply connected algebraic group, and is now called the Rost invariant.

The motivic $J$-invariant was introduced by Vishik in \cite{Vi05} for quadratic forms, in \cite{PSZ08} for arbitrary semisimple algebraic groups of inner type, in \cite{Fi19} for Hermitian forms, and in \cite{GZ26} for semisimple groups of outer type. This invariant encodes the structure of Chow motives of generically split twisted flag varieties.

The $J$-invariant was an important tool to solve several long-standing problems. For example, it plays an important role in the progress on the Kaplansky problem about possible values of the $u$-invariant of fields by Vishik \cite{Vi07},  in the solution of a problem of Serre about groups of type $\E_8$ and its finite subgroups \cite{S16}, \cite{GS10}, and in the classification of generically split twisted flag varieties in \cite{PS10}, \cite{PS12}.

\medskip

In the present article we use motivic techniques and give a motivic interpretation of a degree $5$ cohomological invariant due to Garibaldi and Petersson \cite{GaPe07} of groups of type $^2\E_6$ which split over a quadratic field extension and whose Rost invariant is a pure symbol modulo $2$ (see Section~\ref{sec4}). It was shown in \cite{GP24} that such groups arise from a Tits construction and thus fit into the Freudenthal magic square.

Note that analogous results on groups of type $\F_4$, $\E_7$ and $\E_8$ were established earlier in \cite[Theorem~3.12]{MD09}, \cite[Chapter~12]{He23} and \cite[Theorem~8.7]{S16} (see also Section~\ref{sec8} below).

In the course of our proofs we obtain motivic decompositions of some twisted flag varieties of type $^2\E_6$ (Proposition~\ref{t211pro}) and relate the Rost invariant and the $J$-invariant (Proposition~\ref{lp24}). Moreover, we classify generically quasi-split varieties of type $^2\E_6$ (see Remark~\ref{genqs}).
This result is analogous to the classification of generically split varieties given in \cite{PS10} and \cite{PS12} for groups of inner type.

\medskip

In Section~\ref{sec5} we investigate the properties of the Rost invariant of groups of type $^2\E_6$ and $\E_7$. 
In particular, we give a cohomological criterion for the isotropy of groups of type $\E_7$ up to odd degree field extensions (see Corollary~\ref{cor52}). Recall (see \cite{GaPS16}) that Markus Rost and Tonny Springer independently raised the general question of possible relations between the Rost invariant and the rationality of parabolic subgroups for groups of type $\E_7$. This question was settled in \cite{GaPS16}. Corollary~\ref{cor52} improves results of \cite{GaPS16}.

Furthermore, we use Corollary~\ref{cor52} to give a different proof of a result of Petrov and Rigby \cite{PR22} saying that over $2$-special fields the Tits construction cannot produce groups of type $\E_8$ with the semisimple anisotropic kernel of type $\E_7$ (see Corollary~\ref{peri}).

\medskip

In Section~\ref{secc5} we briefly recall an Allison--Faulkner construction of algebraic groups. This construction has an advantage that it is more symmetric and allows to ignore an apparent asymmetry of the Tits constructions.

We illustrate this in Section~\ref{secc5} considering a degree $5$ invariant of groups of type $\E_7$. Note that groups of type $\E_7$ are the biggest groups from the Freudenthal magic square possessing two different Tits constructions.

\medskip

Section~\ref{sec7} is devoted to possible values of the $J$-invariant of groups of type $^2\E_6$ and $\E_7$ (see Proposition~\ref{j110} and Proposition~\ref{prop76}). The problem of determining possible values of the $J$-invariant has a long history starting from an explicit question of Vishik \cite[Question~5.13]{Vi05} in the context of quadratic forms. For representative examples we refer to \cite[last column of Table in Section~4.13]{PSZ08} for groups in general, to \cite[Corollary~8.10 and Corollary~10.4]{GaPS16} for groups of type $^1\E_6$ and to \cite[Proposition~8.24]{PS21} for groups of type $\E_8$.

\medskip

Finally, in Section~\ref{sec8} we summarize the results of this article related to groups from the Freudenthal magic square.
It turns out that under some additional conditions (specified in Section~\ref{sec8}) these groups possess a cohomological invariant of degree $2$, $3$, $4$ or $5$ (see Table~\ref{tab1}). This yields an additional symmetry of the Freudenthal magic square that, apparently, has not been discussed in the literature before.

\medskip

\noindent
{\bf Acknowledgments}. We sincerely thank Skip Garibaldi for his valuable comments on exceptional algebraic groups.

\section{Preliminaries on algebraic groups and motives}

\begin{ntt}[Algebraic groups]
For the background of the theory of algebraic groups we refer to books \cite{Bor}, \cite{Spr}, and \cite{KMRT}.

In the present article we freely use the notion of a Tits index (see \cite{Ti66}), Tits algebras (see \cite[\S27.A]{KMRT}), Rost invariant (see \cite{GaMS03}), Rost multiplier (see \cite[p.~122]{GaMS03}), concepts related to Jordan algebras, e.g. Albert and Freudenthal algebras (see \cite[Chapter~IX]{KMRT}), Tits constructions (see \cite[\S41]{KMRT}), and the Freudenthal magic square.

Let $G$ be a semisimple linear algebraic group of rank $n$ over a field $F$. We say that the group $G$ is {\it strongly inner}, if $G$ is a group of inner type and all its Tits algebras are split.

For a subset $I\subset\{1,2,\ldots, n\}$ we denote by $X_I$ the variety of parabolic subgroups of $G$ of type $I$. According to our convention, the variety of Borel subgroups is $X_{1,2,\ldots,n}$ and the varieties of maximal parabolic subgroups of $G$ are denoted by $X_1, X_2,\ldots, X_n$. The enumeration of simple roots follows Bourbaki.

For a prime number $p$ we say that $G$ is {\it $p$-anisotropic}, if it is anisotropic over a $p$-special closure of $F$.

When talking about cohomological invariants of algebraic groups modulo $2$, we assume that the characteristic of $F$ is different from $2$. For the Tits constructions we assume that the characteristic of $F$ is different from $2$ and $3$. Finally, talking about degree $5$ invariants of groups of type $\E_7$ and $\E_8$ we assume that the characteristic of $F$ is $0$.
\end{ntt}

\begin{ntt}[Chow rings and motives]
For the background of the intersection theory and motives we refer to books \cite{Ful} and \cite{EKM}.

For a smooth variety $X$ over $F$ we denote by $\CH^*(X)$ its Chow ring. For a prime number $p$ we denote by $\Ch^*(X)$ the Chow ring of $X$ modulo $p$.

We denote
by $\cores$ the corestriction of a variety or of a cycle (see \cite[Section~3]{Ka10a}, \cite[Section~3]{GZ26})

In the present article we work in the category of Chow motives with $\ff_p$-coefficients. We write $\Mot(X)$ for the Chow motive of a smooth projective variety $X$. For a motive $M$ and a non-negative integer $l$ we denote by $M\{l\}$ its Tate twist by $l$.

For a smooth projective variety $X$ over $F$ and a field extension $L/F$ we write $X_L$ for the respective extension of scalars.

For a twisted flag variety $X$ we denote by $\U(X)$ its upper motive (see \cite[Section~1]{Ka10a}).

In the case of groups of outer type $^2\E_6$ we sometimes use the category of (co)normed Chow motives (see \cite[Section~5]{GZ26}). For a twisted flag variety $X$, a direct summand $M$ of the motive of $X$ or a cycle $\alpha$ we denote by 
$\overline X$, $\overline M$ and $\bar\alpha$ the extension
of scalars either to a splitting field (when we work in the category of ordinary Chow motives) or to a quasi-splitting field (when we work on the category of conormed Chow motives: see \cite[End of Section~5]{GZ26}) of $X$.

Finally, we denote by
$P(M,t)\in\zz[t]$ (resp. by $P_N(M,t)\in\zz[t]$) the Poincar\'e polynomial (resp. the conormed Poincar\'e polynomial) of the motive $M$ over a splitting (resp. quasi-splitting) field. Note that we use the conormed Poincar\'e polynomials only in Section~\ref{sec4} for  groups of type $^2\E_6$.

\end{ntt}

\begin{ntt}[$J$-invariant]
In the present article we intensively use the notion of the $J$-invariant of semisimple algebraic groups (see \cite{PSZ08} and \cite{GZ26}).

The $J$-invariant is a discrete invariant which describes the
motivic behavior of the variety of Borel subgroups of a semisimple linear algebraic group. 
Let $G$ be a {\it split} semisimple group over a field $F$, let $B$ be a Borel subgroup of $G$, let $E$ be a $G$-torsor, and let $p$ be a prime number. 
It turns out that the Chow motive of the twisted flag variety $E/B$ with coefficients in $\ff_p$ decomposes into a direct sum of Tate twists of an indecomposable motive $R_p(E)$ and the Poincar\'e polynomial of $R_p(E)$ over a splitting field of $E$ is a product of ``cyclotomic-like'' polynomials. More precisely, it equals 

\begin{equation}\label{f11}
\prod_{i=1}^r\frac{t^{d_ip^{j_i}}-1}{t^{d_i}-1}\in\zz[t],
\end{equation}
for some non-negative integers $r$, $d_i$ and $j_i$. The parameters $r$ and $d_i$ are combinatorial and are given in a table in \cite[Section~4.13]{PSZ08}, and the $r$-tuple
$(j_1,\ldots,j_r)$ is of geometric nature and constitutes the $J$-invariant of $E$. Moreover, there are explicit combinatorial upper bounds for the entries of the $J$-invariant: $j_i\le k_i$, which are also given in \cite[Section~4.13]{PSZ08}.

Note also that the parameters $d_i$ and $k_i$ are contained
essentially in a table of Kac \cite[Table~2]{Kac85}.

Moreover, there is an analogous concept of the $J$-invariant for groups of outer type, i.e., when $G$ is a {\it quasi-split} semisimple group, given in \cite{GZ26}. In this case one works in the category of conormed Chow motives instead of the ordinary ones (see \cite[Section~5]{GZ26}). The respective table with the parameters of the $J$-invariant is given in \cite[Table~5]{GZ26}.

Sometimes we will include the degrees $d_i$ to the $J$-invariant and write
$\underset{d_1\ \ \ldots\  \ d_r}{(j_1,\ldots,j_r)}$ instead of $(j_1,\ldots,j_r)$.

The following table contains the maximal values of the $J$-invariant for the prime $p=2$ for groups from the Freundenthal magic square. Note that for groups of inner type we work in the category of ordinary Chow motives and for groups of outer type we work in the category of conormed Chow motives. Moreover, we omit the cases $\A_1$, $^2\A_2$, $\C_3$ and $\F_4$, since we do not use their $J$-invariants in the present article.

Note that the maximal values of the $J$-invariants can be realized over some fields, but the respective groups do not necessarily arise from the Tits constructions. For example,
the maximal possible $J$-invariant for groups of type $\E_7$ arising from the $\A_1\times \F_4$ Tits construction is $(1,1,0,0)$. 

\begin{longtable}{c|c|c|c|c|c}
\caption{Maximal values of the $J$-invariant}\label{tab:Jmax}\\

$2\,{^2\A_2}$ & $^2\A_5$ & $^1\D_6$ & $^2\E_6$ & $\E_7$ & $\E_8$\\
\hline
\endfirsthead

$2\,{^2\A_2}$ & $^2\A_5$ & $^1\D_6$ & $^2\E_6$ & $\E_7$ & $\E_8$\\
\hline
\endhead

$\displaystyle\underset{3\ \,3}{(1,1)}$
&
$\displaystyle\underset{2\ \,1\ \,3\ \,5}{(0,1,1,1)}$
&
$\displaystyle\underset{1\ \,1\ \,3\ \,5}{(1,3,1,1)}$
&
$\displaystyle\underset{3\ \,5\ \,9}{(1,1,1)}$
&
$\displaystyle\underset{1\ \,3\ \,5\ \,9}{(1,1,1,1)}$
&
$\displaystyle\underset{3\ \,5\ \,9\ \,15}{(3,2,1,1)}$
\\

\end{longtable}
\end{ntt}

\section{Motives of $^2\E_6$-varieties and a cohomological invariant of degree $5$}\label{sec4}

Let $G$ be an algebraic group of type $^2\E_6$ over a field $F$ of characteristic different from~$2$.
We denote by $K$ the respective quadratic field extension of $F$. 

If the Tits algebras of $G$ are split, then by \cite[Lemma~2.1]{GaPe07} there is a well-defined Rost invariant of $G$, which we denote by $r(G)$. In general, one has $12r(G)=0$ (see \cite[p.~442]{KMRT}).

Note, however, that if the group $G$ splits over $K$, then the Tits algebras of $G$ are automatically split and $2r(G)=0$.

\begin{prop}\label{t21pro}
Let $G$ be an algebraic group of type $^2\E_6$ over a field $F$ of characteristic different from $2$ and let $K$ be the respective quadratic field extension of $F$.

Assume that the group $G$ splits over $K$ and the Rost invariant $r(G)$ of $G$ is a pure symbol in $H^3(F,\mu_2)$.

Then there exists a functorial cohomological invariant $u\in H^5(F,\mu_2)$ of $G$ such that for every field extension $L/F$
the variety $(X_{2})_L$ has a rational point if and only if $u_L=0$.
Moreover, $u$ is a pure symbol.
\end{prop}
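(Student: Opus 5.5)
Since $G$ splits over $K$ and $r(G)$ is a pure symbol $(a)\cup(b)\cup(c)$ in $H^3(F,\mu_2)$, I would use the structure result of \cite{GP24}: such $G$ arises from a Tits construction. Concretely, $G$ is the group attached to a Freudenthal/Albert-type structure built from the quadratic étale algebra $K$ and the octonion algebra $C$ with norm form the $3$-Pfister form $\langle\langle a,b,c\rangle\rangle$. Garibaldi and Petersson \cite{GaPe07} attach to this data a $5$-Pfister form $\langle\langle a,b,c,d,e\rangle\rangle$, where $K=F(\sqrt{d})$ and $e$ comes from the remaining Tits-construction parameter. Its Arason invariant gives a pure symbol $u=(a)\cup(b)\cup(c)\cup(d)\cup(e)\in H^5(F,\mu_2)$. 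Functoriality holds because the construction commutes with base change. I would check that $u$ depends only on $G$ (not on the chosen presentation) by comparing with the motivic description below: the vanishing locus of $u$ will be shown to equal the isotropy locus of $X_2$. That determines $u$ up to the usual uniqueness for Pfister forms, since a $5$-Pfister form is determined by the field extensions over which it becomes hyperbolic.

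The main content is the equivalence: $(X_2)_L$ has a rational point if and only if $u_L=0$. For the direction $u_L=0\Rightarrow X_2(L)\neq\emptyset$, I would translate $u_L=0$ into the splitting of the relevant Jordan-theoretic datum over $L$. In the Garibaldi--Petersson setup, this means the hermitian/Albert form attached to the Tits construction becomes isotropic, which forces a nontrivial parabolic of type $\{2\}$ (Tits index with the node $2$ circled). I expect this to follow from \cite{GaPe07} essentially directly.

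For the converse, and to obtain a motivic interpretation, I would work in conormed Chow motives mod $2$ as in \cite{GZ26}. The plan has three steps.
\begin{enumerate}
\item Since $r(G)$ is a symbol and $G$ splits over $K$, the $J$-invariant is at most $(1,0,0)$ with degrees $3,5,9$, plausibly exactly $(1,0,0)$ or $(1,1,0)$ in the relevant case.
\item Compute the motive of $X_2$ as a sum of twists of the upper motive $\U(X_2)$, with a Rost-motive-like piece whose conormed Poincaré polynomial is $1+t^{15}$, or similar, matching the dimension $2^4-1$ of a $5$-fold Pfister quadric.
\item Compare this with the Rost motive $R_u$ of the Pfister quadric $Q_u$ of $u$. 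Using the upper-motive criterion of Karpenko \cite{Ka10a}, $\U(X_2)\simeq\U(Q_u)$, so $X_2$ and $Q_u$ have rational points over the same field extensions. By Arason, $Q_u$ is isotropic exactly when $u_L=0$.
\end{enumerate}

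The main obstacle is step~2: producing the isomorphism of upper motives. This requires showing that $X_2$ acquires a point over $F(Q_u)$ and conversely that $Q_u$ acquires a point over $F(X_2)$. The first direction is handled by the cohomological argument above. For the second, I would show that over $F(X_2)$ the group has anisotropic kernel of small type (e.g.\ $^2\A$ or $\D_4$), where the degree-$5$ symbol visibly vanishes. I would take the explicit classification of Tits indices of $^2\E_6$ containing the circled node $2$ as the starting point.
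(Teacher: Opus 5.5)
\textbf{There is a genuine gap in the implication $X_2(L)\neq\emptyset\Rightarrow u_L=0$.} You take $u$ from the Tits-construction input supplied by \cite{GP24}, and you invoke \cite{GaPe07} only for $u_L=0\Rightarrow X_2(L)\neq\emptyset$. For the converse you claim that over $F(X_2)$ the degree-$5$ symbol ``visibly vanishes'', but nothing makes this visible.

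Over $F(X_2)$ the index is the one with $\{1,6\}$ and $\{2\}$ circled, or quasi-split, so the kernel is of type $^2\A_3$ or trivial. It is not $\D_4$, which occurs only when $r(G)$ is not divisible by $K$. However, $u_{F(X_2)}$ is computed from the base-changed input data (Albert/octonion algebra and scalars). Knowing the Tits index of $G_{F(X_2)}$ does not tell you that this particular symbol dies. That isotropy of the output kills the symbol is exactly the nontrivial content of \cite[Theorem~0.2]{GaPe07}.

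You also cannot bypass this via independence of the presentation, because you derive that independence from the very equivalence you are trying to prove.

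The motivic steps do not close the gap either:
\begin{enumerate}
\item Karpenko's criterion gives $\U(X_2)\simeq\U(Q_u)$ only once you already know that $Q_u$ is isotropic over $F(X_2)$ and that $X_2$ has an odd-degree point over $F(Q_u)$. So your Step~3 is circular.
\item Even if it were available, that criterion would give only odd-degree zero-cycles on $X_2$, not rational points.
\item The decomposition of $\Mot(X_2)$ as in Proposition~\ref{t211pro} needs the extra hypothesis that $X_2$ has no zero-cycle of odd degree. It produces a binary summand with Poincar\'e polynomial $1+t^{15}$, but it does not identify that summand with the Rost motive of any specific $5$-Pfister form. That identification is precisely the conjectural correspondence between symbols and binary motives mentioned before Proposition~\ref{t211pro}.
\end{enumerate}

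\textbf{The repair is simply to use the full equivalence (1)$\Leftrightarrow$(4) of \cite[Theorem~0.2]{GaPe07}} for the group furnished by \cite[Theorem~3.3]{GP24}. That is the paper's whole proof, and no motivic input is needed.

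Two further slips:
\begin{enumerate}
\item Your explicit shape $u=(a)\cup(b)\cup(c)\cup(d)\cup(e)$ with $(a)\cup(b)\cup(c)=r(G)$ is suspect. Since $r(G)_K=0$, $r(G)$ is divisible by $(d)$, so $r(G)\cup(d)=r(G)\cup(-1)$. Your $u$ would then vanish identically over every field containing $\sqrt{-1}$, which would force all such groups over such fields to be isotropic.
\item When $r(G)\neq 0$, the $J$-invariant is exactly $(1,0,0)$ by Proposition~\ref{lp24}. Your phrase ``at most $(1,0,0)$, plausibly $(1,1,0)$'' is self-contradictory.
\end{enumerate}
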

\begin{proof}
This is a combination of \cite[Theorem~3.3]{GP24} and \cite[Theorem~0.2, equivalence of (1) and (4)]{GaPe07}.
\end{proof}

\begin{rem}
For example, all conditions of Proposition~\ref{t21pro} are satisfied by the (unique) anisotropic simply connected group of type $^2\E_6$ over the field $\mathbb{R}$.    
\end{rem}

In general, it has been conjectured that there is a correspondence between pure symbols in $H^n(F,\mu_2)$ and binary motives of dimension $2^{n-1}-1$ (see, e.g., \cite[Introduction]{S16}).

The next proposition confirms this in the $^2\E_6$-case as above. 
Namely, we give a motivic interpretation of the invariant of Proposition~\ref{t21pro} and construct the respective binary motive as a direct summand of the motive of the variety $X_{2}$. Note that in the proof we use neither the main theorem of Garibaldi and Petersson \cite[Theorem~0.2]{GaPe07} nor any result from \cite{GP24}.

\begin{prop}\label{t211pro}
Let $G$ be an algebraic group of type $^2\E_6$ over a field $F$ of characteristic different from $2$
and let $K$ be the respective quadratic extension of $F$. Assume that the group $G$ splits over $K$ and the Rost invariant $r(G)$ of $G$
is a pure symbol in $H^3(F,\mu_2)$.

Assume additionally that the variety $X_{2}$ does not have zero-cycles of odd degree.

Then we have the following decomposition in the category of Chow motives with $\ff_2$-coefficients:
\begin{equation}\label{eee}
\Mot(X_2)\simeq
\U(X_2)\oplus\U(X_2)\{6\}\oplus
(\oplus_{i\in I}R_2(G)\{i\}))\oplus_{j\in J}(\cores_{K/F}(\Spec K)\{j\}),
\end{equation}
where
${I=\{1,2,6,7,8,10,11,12,16,17\},\,J=[1,\ldots,18]\coprod [7,\ldots,14]}$.

The Poincar\'e polynomial of $\U(X_2)$ equals $1+t^{15}$
and $R_2(G)\simeq\U(X_{1,2,3,4,5,6})$ is the Rost motive corresponding to the pure symbol $r(G)$.
\end{prop}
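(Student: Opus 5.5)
The plan is to use the theory of upper motives (Karpenko \cite{Ka10a}, extended to outer groups and to corestrictions in \cite{GZ26}), together with the $J$-invariant of $G$ and a Chernousov--Gille--Merkurjev type decomposition of $\Mot(X_2)$ over the function field of $X_2$.

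\textbf{Step 1 (building blocks).} By the outer version of Karpenko's theorem, every indecomposable summand of $\Mot(X_2)$ with $\ff_2$-coefficients is a Tate twist of one of the following:
\begin{itemize}
\item an upper motive $\U(X_I)$, where $I$ is a $\mathrm{Gal}(K/F)$-stable set of vertices;
\item a corestriction $\cores_{K/F}$ of an upper motive over $K$.
\end{itemize}
Since $G_K$ is split, every upper motive over $K$ is Tate, so the second kind contributes only copies of $\cores_{K/F}(\Spec K)$. Next I compute the $J$-invariant of $G$ from Table~\ref{tab:Jmax}. Because $G$ splits over the quadratic extension $K$, all entries except the degree $3$ one must vanish: the entries in degrees $5$ and $9$ would force generic torsion that a degree $2$ extension cannot kill, which I would check via the Steenrod-operation/degree argument of \cite{PSZ08}. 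The nontriviality of $r(G)$ (a nonzero symbol, else $X_2$ has a rational point) forces $j_1=1$. Hence $R_2(G)=\U(X_{1,\dots,6})$ has Poincar\'e polynomial $1+t^3$. Since $r(G)$ is a pure symbol, this is the Rost motive of that symbol, by the standard identification of the upper motive of a generically split variety with $J=(1,0,0)$ and nontrivial Rost invariant.

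\textbf{Step 2 (which upper motives occur).} Using that $X_2$ has no zero-cycles of odd degree, and using the Tits indices of $^2\E_6$ together with the hypothesis that $G$ splits over $K$, I will show the following. For every Galois-stable $I\neq\{2\}$ with $X_I$ not generically split, the variety $X_I$ is equivalent in the upper-motive sense either to $X_2$ or to the Borel variety. For instance, over $F(X_{1,6})$ or $F(X_4)$ the group becomes isotropic, and the anisotropic kernel forces a rational point on $X_2$ or splits the symbol. Therefore the only possible non-Tate-like summands are Tate twists of $\U(X_2)$ and $R_2(G)$, plus copies of $\cores_{K/F}(\Spec K)$.

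\textbf{Step 3 (the structure of $\U(X_2)$ and the multiplicities).} Over $L=F(X_2)$ the group acquires Tits index with vertex $2$ circled, and the anisotropic kernel is of type $^2\A_5$ split over $LK$. The CGM/Brosnan decomposition of $\Mot(X_2)_L$ then expresses it as a sum of Tate motives, corestrictions from $LK$, and twisted motives of projective homogeneous varieties for this kernel (whose $J$-invariant is again governed by the symbol $r(G)_L$). Comparing this with the conormed Poincar\'e polynomial of $X_2$ (dimension $21$), I would identify the candidate complement of the generic point: the dual cycle lies in dimension $15$. The key claim is that $\U(X_2)$ has Poincar\'e polynomial $1+t^{15}$, i.e.\ it is binary of dimension $2^4-1$. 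Upper and lower motives of $X_2$ coincide up to twist by duality, which explains the two summands $\U(X_2)$ and $\U(X_2)\{6\}$, since $21-15=6$. The remaining multiplicities $I$ and $J$ then follow by subtracting Poincar\'e polynomials. Uniqueness of this bookkeeping comes from Krull--Schmidt for motives with $\ff_2$-coefficients.

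\textbf{Main obstacle.} I expect the hard part to be proving that $\U(X_2)$ is exactly binary with top degree $15$. This requires two things:
\begin{itemize}
\item showing that a suitable rational cycle of codimension $15$ on $\overline{X_2}\times\overline{X_2}$ (conormed) has odd multiplicity, so that the upper motive contains the Tate piece $\{15\}$;
\item showing that no intermediate Tate pieces are rational, which again uses the absence of odd-degree zero-cycles.
\end{itemize}
For the first, I would try to construct the cycle explicitly from the $^2\A_5$ anisotropic kernel over $F(X_2)$: its relevant variety (a twisted form related to a $6$-dimensional Hermitian form) carries a Rost-type binary motive. I would push this forward via the CGM decomposition and then use the Rost nilpotence principle together with Vishik's ``generic point diagram'' argument to lift it to $F$.
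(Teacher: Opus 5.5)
Your proposal has a genuine gap at the step you yourself flag as the main obstacle, and two of the inputs feeding into it are wrong.

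\emph{Wrong inputs.} First, the claim in your Step~1 that splitting over $K$ alone kills the entries of degrees $5$ and $9$ of $J(G)$ is false; a transfer through a degree-$2$ extension carries no information mod $2$. Consider an isotropic group whose Tits index has only the vertex $2$ circled, with anisotropic kernel $\SU(h)$ for an anisotropic $6$-dimensional hermitian form $h$ over $K$ (obtained, e.g., by twisting the quasi-split group by a cocycle with values in the Levi subgroup of $P_2$). Such a group is split by $K$ and has trivial Tits algebras. By Table~\ref{tabb2}, however, its Rost invariant is not a pure symbol, so by Proposition~\ref{lp24} its $J$-invariant has $j_2=1$. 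The value $J(G)=(1,0,0)$ therefore has to come from the pure-symbol hypothesis. The paper obtains it by embedding $G$ into a strongly inner group of type $\E_7$ with Rost multiplier $1$ and the same $J$-invariant, and then applying \cite[Table~10.B]{GaPS16}.

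Second, in your Step~3 the anisotropic kernel over $F(X_2)$ is not of type $^2\A_5$. Since $r(G)$ remains a pure symbol divisible by $K$ over every extension, Table~\ref{tabb2} forces the index with $2$ and $\{1,6\}$ circled, i.e.\ a $^2\A_3$ kernel. This is exactly what produces the Tate summands with twists $0,6,15,21$ in the Chernousov--Gille--Merkurjev--Brosnan decomposition; a $^2\A_5$ kernel would give a different list.

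\emph{The main gap: binarity of $\U(X_2)$.} The absence of odd-degree zero-cycles only excludes $\U(X_2)\simeq\ff_2$. It does not prevent $\U(X_2)$ from also absorbing $\ff_2\{6\}$ or $\ff_2\{21\}$, for instance a single upper summand with Tate pieces at $0,6,15,21$. Your plan to build a codimension-$15$ cycle from the kernel over $F(X_2)$ does not address this. Over $F(X_2)$ the upper motive is already a sum of Tate motives; the real question is which of them are glued together over $F$. Moreover, the $^2\A_3$ kernel only carries the $3$-dimensional Rost motive, so nothing $15$-dimensional can come from it.

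The missing idea is to compare $X_2$ with $X_{1,6}$. By Table~\ref{tabb2} the two varieties become isotropic over the same field extensions, so $\U(X_2)\simeq\U(X_{1,6})$. Over the function field, every summand of the form $R_2(G)\{i\}$ or $\cores_{K/F}(\Spec K)\{j\}$ is already a summand over $F$. For the first kind this is De Clercq's Proposition~\ref{declercq}; for the second it is \cite[Proposition~4.1]{Ka12a}, which applies because $G$ splits over $K$. Consequently the common upper motive becomes a sum of Tate motives whose twists lie in
$\{0,6,15,21\}\cap\{0,9,15,24\}=\{0,15\}$.
Only at this point does the zero-cycle hypothesis yield the Poincar\'e polynomial $1+t^{15}$. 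The multiplicities $I$ and $J$ then follow by comparing the ordinary polynomial $P(X_2,t)$ with the conormed one $P_N(X_2,t)$, which separates the corestrictions from the twists of $R_2(G)$.
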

\begin{proof}
The proof consists of several steps.

{\it Step 1.} Analysis of the Tits indices.

First, we copy the table \cite[Proposition~2.3]{GaPe07} describing the Rost invariants for all Tits indices (see \cite{Ti66}) of {\it isotropic} groups of type $^2\E_6$ with trivial Tits algebras. Note that for isotropic groups of type $^2\E_6$ we always have that $2r(G)=0$.

\begin{longtable}{c|c}
\caption{Tits indices and the Rost invariants}\label{tabb2}\\
Tits index & Condition on the Rost invariant \\ \hline
quasi-split & $r(G) = 0$ \\
\parbox{5cm}{\begin{picture}(5, 1.1)
\put(0,0){\usebox{\dEpic}} 
\put(4,0.5){\oval(0.4,0.75)}
\put(1,0.5){\circle{\lradG}}
\end{picture}}& $r(G)$ is a non-zero pure symbol divisible by $K$\\
\parbox{5cm}{\begin{picture}(5, 1.1)
\put(0,0){\usebox{\dEpic}} 
\put(4,0.5){\oval(0.4,0.75)}
\end{picture}}& $r(G)$ is a pure symbol not divisible by $K$\\
\parbox{5cm}{\begin{picture}(5, 1)
\put(0,0){\usebox{\dEpic}} 
\put(1,0.5){\circle{\lradG}}
\end{picture}}& $r(G)$ is not a pure symbol\\
\hline
\parbox{5cm}{\begin{picture}(5, 1)
\put(0,0){\usebox{\dEpic}} 
\put(1,0.5){\circle{\lradG}}
\put(2,0.5){\circle{\lradG}}
\end{picture}} & impossible, if the Tits algebras of $G$ are split
\end{longtable}

In particular, since by our assumptions the Rost invariant of $G$ is a pure symbol divisible by $K$, it follows immediately that for every field extension $L/F$ the varieties $(X_2)_L$, $(X_{1,6})_L$, and $(X_{1,2,6})_L$ are isotropic resp. anisotropic simultaneously.

In this case it is well known that the upper motives of $X_2$, $X_{1,6}$, and $X_{1,2,6}$ are isomorphic over $F$.

{\it Step 2.} Possible motivic summands of $X_2$ and $X_{1,6}$.

First, we cite a theorem of Karpenko about motivic summands of projective homogeneous varieties of outer type.

Let $G$ be a semisimple algebraic group over $F$.
Let $K/F$ be a Galois field extension such that the group $G_K$ is of inner type and assume that $|K:F|=p^n$ for some $n$ and some prime $p$.

\begin{prop}[Karpenko, {\cite[Theorem~1.1]{Ka10a}}]\label{pro32k}
Under above assumptions
let $X$ be a twisted flag variety over $F$ homogeneous under a semisimple algebraic group $G$.

The complete motivic decomposition
of $X$ over $F$ consists of Tate twists of upper motives of the form $\cores_{E/F}\U(Y)$, where $E$ runs over all intermediate subfields $F\subseteq E\subseteq K$ and $Y$ runs over all twisted flag varieties over $E$ homogeneous under the action of $G_E$.
\end{prop}

\begin{rem}
Karpenko gives in \cite{Ka10a} a more precise statement about possible motivic summands. The above statement will be sufficient for the purposes of the present article.
\end{rem}

According to Karpenko possible indecomposable summands of the Chow motive of $X_{2}$ (and of $X_{1,6}$) considered with $\ff_2$-coefficients are Tate twists of motives from the following list (a posteriori not all motives from this list may appear):
$$\U(X_{2}),\, \U(X_{1,2,3,4,5,6}),\text{ and }\cores_{K/F}(\Spec K).$$

Indeed, since $G$ splits over $K$, the only corestriction $\cores_{K/F}$ which may appear is $\cores_{K/F}(\Spec K)$. Moreover, we have by Step~1 that $\U(X_{1,6})\simeq\U(X_{1,2,6})\simeq\U(X_{2})$, and, therefore, no other upper motives appear in the list.

{\it Step 3.} The upper motives of generically quasi-split varieties of type $^2\E_6$.

\begin{prop}\label{lpr25}
Let $G$ be an arbitrary group of type $^2\E_6$ over an arbitrary field.
Assume that $J(G)\ne (0,0,0)$. Then the varieties $X_{1,6}$, $X_2$ and $X_{1,2,6}$ are not generically quasi-split.
\end{prop}
\begin{proof}
Note first that it is a general fact that all twisted flag varieties homogeneous under an action of a group $G$ of outer type are never generically split, i.e., the group $G$ can not become split over their function fields. Thus, we can require at most quasi-split instead of split.

Let $J(G)=(j_1,j_2,j_3)$. Since $J(G)$ is not trivial and $1\ge j_1\ge j_2\ge j_3\ge 0$ (see \cite[Table~5]{GZ26}), we have $j_1=1$.

Assume that $X_{1,6}$ or $X_2$ is generically quasi-split. Then their conormed Chow motives are direct sums of Tate twists of the same indecomposable motive with Poincar\'e polynomial $(1+t^3)(1+t^5)^{j_2}(1+t^9)^{j_3}$.
But the conormed Poincar\'e polynomials of $X_2$ and $X_{1,6}$ are not divisible by $1+t^3$ in the semiring $\mathbb{N}_0[t]$. Indeed,
a direct computation (see \cite[Proposition~6.1]{GZ26}) shows that

$$P_N(X_2,t)=\frac{(t^8-1)(t^{12}-1)(t^9+1)}{(t-1)(t^4-1)(t^3+1)}\text{ and}$$

$$P_N(X_{1,6},t)=\frac{(t^8-1)(t^{12}-1)(t^5+1)(t^9+1)}{(t-1)(t+1)(t^4-1)(t^4+1)}$$
and both polynomials are not divisible by $1+t^3$ in $\mathbb{N}_0[t]$ (although they are divisible by $1+t^3$ in $\zz[t]$). Contradiction.

Assume now that the variety $X_{1,2,6}$ is generically quasi-split. Note that the Poincar\'e polynomial $P_N(X_{1,2,6},t)$ is divisible by $1+t^3$ in $\mathbb{N}_0[t]$. Therefore, we will argue in a slightly different way.

If $X_{1,2,6}$ is generically quasi-split, then the product $X_2\times X_{1,6}$ is generically quasi-split as well. But the Poincar\'e polynomial $$P_N(X_2\times X_{1,6},t)=P_N(X_{2},t)P_N(X_{1,6},t)$$ is not divisible by $1+t^3$ in $\mathbb{N}_0[t]$. This finishes the proof exactly in the same way as above.
\end{proof}

\begin{rem}\label{genqs}
It follows from Proposition~\ref{lpr25} and from Table~\ref{tabb2} that if the Tits algebras of $G$ are split, then all other twisted flag $G$-varieties not listed in Proposition~\ref{lpr25} are generically quasi-split. Moreover, if $J(G)=(0,0,0)$, then all twisted flag $G$-varieties are generically quasi-split.

If the Tits algebras of $G$ are not split, then one should distinguish more cases. The varieties $X_2$, $X_4$ and $X_{2,4}$ are never generically quasi-split by the index reduction formula.
If ${J(G)=(0,0,0)}$, then all twisted flag $G$-varieties apart from $X_2$, $X_4$ and $X_{2,4}$ are generically quasi-split.

If $J(G)\ne(0,0,0)$, then the varieties $X_{1,6}$, $X_2$, $X_{1,2,6}$, $X_4$, $X_{2,4}$ are not generically quasi-split and all other twisted flag varieties are so.
\end{rem}

\begin{prop}\label{lp24}
Let $G$ be a simply connected algebraic group of type $^2\E_6$ with trivial Tits algebras. Then the following conditions are equivalent:

1) For the Rost invariant of $G$ we have $6r(G)=0$ and $3r(G)$ is a non-zero pure symbol.

2) The $J$-invariant of $G$ equals $(1,0,0)$.

Moreover, if these conditions are satisfied, then the upper motive of the variety of Borel subgroups of $G$ modulo $2$ is the Rost motive corresponding to the Rost invariant of $G$.
\end{prop}
\begin{proof}
It is well known that there is an embedding of $G$ in a simply connected strongly inner group of type $\E_7$ with Rost multiplier $1$. Moreover, both groups have the same $J$-invariant (see \cite[Section~7, case $^2\E_6$]{GZ26}).

But for groups of type $\E_7$ the equivalence of 1) and 2) is established in \cite[Table~10.B]{GaPS16}.

For the last part of the lemma it suffices to show that for an arbitrary field extension $L/F$ the group $G_L$ is quasi-split (up to odd degree field extensions) if and only if the Pfister quadric $Q$ corresponding to the Rost invariant $3r(G)$ is isotropic over $L$.

If $G_L$ is quasi-split (up to odd degree field extensions), then it is straightforward that $Q_L$ is isotropic.

For the converse implication it suffices to show that $J(G_L)=(0,0,0)$. We assume the contrary. Then by Proposition~\ref{lpr25} the group $G_{F(X_2)}$ is isotropic, but not quasi-split. On the other hand, this is impossible by Table~\ref{tabb2}, if $3r(G)_L=0$.
\end{proof}

\begin{rem}
In Proposition~\ref{lp24} one can omit the condition that the Tits algebras of $G$ are trivial. In this case instead of the Rost invariant one should take the invariant $b(G)$ defined in \cite[Lemma~3 and Definition~4]{DCG17} and assume that $2b(G)=0$ and $b(G)$ is a non-zero pure symbol.
\end{rem}

Thus, we have determined the $J$-invariant for our group $G$ and, in particular, the Poincar\'e polynomial of $\U(X_{1,2,3,4,5,6})$, namely $1+t^3\in\zz[t]$. Note that in the case $^2\E_6$ the conormed upper motive of the variety of Borel subgroups has the same Poincar\'e polynomial as the ordinary upper motive (see \cite[Section~8, Exceptional types]{GZ26}).

\begin{rem}
In the proofs of Proposition~\ref{lpr25} and Proposition~\ref{lp24} we do not use that $G$ splits over $K$.
\end{rem}

{\it Step 4.} Motives of isotropic varieties of type $^2\E_6$.

We would like to describe the motives of $X_2$ and $X_{1,6}$, when our group $G$ of type $^2\E_6$ has anisotropic kernel of type $^2\A_3$.

In this case we use the Chernousov--Gille--Merkurjev--Brosnan method. In \cite{CGM05} and \cite{Br05} they described an algorithm to compute motives of isotropic varieties based on the structure of double cosets for the Weyl group. In our situation we are interested in the double cosets $W(\A_3)\!\setminus\! W(\E_6)/W(\A_5)$ for $X_2$ and $W(\A_3)\!\setminus\! W(\E_6)/W(\D_4)$ for $X_{1,6}$. For example, the Tate motives in the resulting motivic decompositions correspond precisely to double cosets which are invariant under the $*$-action and have cardinality $1$.

Note that the method of Chernousov--Gille--Merkurjev--Brosnan reduces the motives of isotropic varieties of type $^2\E_6$ to the motives of twisted flag
varieties for the semisimple anisotropic kernel, i.e., to the case of $^2\A_3$-varieties.
Note also that all twisted flag varieties for $^2\A_3$ (with trivial Tits algebras) are generically quasi-split (this follows, for example, from the Tits diagrams above).

In fact, we will not need a precise formula for the motives, but just the following partial information about them.
We have by the Chernousov--Gille--Merkurjev--Brosnan method:

\begin{align}\label{f27}
\begin{split}
\Mot(X_2)&\simeq\oplus_{i=0,6,15,21}\ff_2\{i\}\oplus N_1\\
\Mot(X_{1,6})&\simeq\oplus_{i=0,9,15,24}\ff_2\{i\}\oplus N_2
\end{split}
\end{align}
where $N_1$ and $N_2$ are direct sums of corestrictions of the form $\cores_{K/F}(\Spec K)\{i\}$ and of the motives of homogeneous $^2\A_3$-varieties corresponding to the anisotropic kernel of $G$.
(Note that $\dim X_2=21$ and $\dim X_{1,6}=24$, so there is a symmetry in the above decompositions).

Since every $^2\A_3$-homogeneous variety is generically quasi-split, the only possible indecomposable motivic summands for these varieties are by Proposition~\ref{pro32k} either $\U(X_{1,2,3,4,5,6})$ or $\cores_{K/F}(\Spec K)$ (up to Tate twists).
Therefore, the motives $N_1$ and $N_2$ are direct sums of motives of at most these two types with positive Tate twists.

{\it Step 5.} Motives of anisotropic varieties of type $^2\E_6$.

In this step we assume that the variety $X_2$ (and hence $X_{1,6}$) does not have zero-cycles of odd degree.

\begin{prop}[De Clercq, {\cite[Theorem~1.1]{DC13}}]\label{declercq}
Let $X$ and $Y$ be two twisted flag varieties for
semisimple algebraic groups, let $M$ be a direct summand of the motive of $Y$, and let $N$ be the upper motive of
$X$. Assume that a Tate twist $(N\{i\})_{F(Y)}$ is an indecomposable direct summand of $M_{F(Y)}$ and $Y$ has an $F(X)$-rational point. Then $N\{i\}$ is a direct summand of $M$ over~$F$.
\end{prop}

In the context of this proposition we will use the following terminology. We will say that if a direct summand $N\{i\}$ is visible over $F(Y)$, then it is already visible over $F$.

We apply this proposition in the following case:
$X=X_{1,2,3,4,5,6}$, $Y=X_2$ or $Y=X_{1,6}$, $M=\Mot(Y)$ and $N=\U(X_{1,2,3,4,5,6})$.

It follows from Proposition~\ref{lpr25} and Step~1 that the anisotropic kernel of $G$ over $F(Y)$ is of type $^2\A_3$. In particular, the motive $N$ remains indecomposable over $F(Y)$. Thus, we obtain motivic decompositions of $Y$ over $F(Y)$ as in formulae~\eqref{f27} in Step~4.

Now it follows from De Clercq's proposition that all direct summands of $N_1$ and $N_2$ isomorphic to $\U(X_{1,2,3,4,5,6})$ (up to Tate twist) are already visible over $F$.

We have the following general lemma.

\begin{lem}
Let $K/F$ be a quadratic field extension, let $Y$ be a twisted flag variety for a semisimple algebraic group over $F$, and let $M$ be a motive over $F$ modulo $2$. Assume that $Y$ has a $K$-rational point and that $\cores_{K/F}(\Spec K)\{i\}_{F(Y)}$ is an indecomposable direct summand of $M$ over $F(Y)$.

Then $\cores_{K/F}(\Spec K)\{i\}$  is a direct summand of $M$ over $F$.
\end{lem}
\begin{proof} This is a particular case of \cite[Proposition 4.1]{Ka12a} (applied with $S=\cores_{K/F}(\Spec K) $ and $E=F(Y)$).
\end{proof}

We apply this lemma in the following situation: $Y$ is the variety $X_2$ or $X_{1,6}$ and $M=\Mot(Y)$ is the motive $Y$.
The conditions of the lemma are satisfied, since by our assumptions the group $G$ splits over $K$.

It follows then that all direct summands of $N_1$ and $N_2$ isomorphic to $\cores_{K/F}(\Spec K)$ (up to Tate twist) over $F(Y)$ are already visible over $F$.
Therefore, the motives $N_1$ and $N_2$ are defined over $F$, and the remaining summands in decompositions~\eqref{f27} belong to the upper motives of $X_2$ and $X_{1,6}$.

But by Step~1 the upper motives of $X_2$ and $X_{1,6}$ are isomorphic. Moreover, since we assumed at the beginning that the variety $X_{1,6}$ does not have zero-cycles of odd degree, the only possibility for the upper motive of $X_{1,6}$ (and $X_2$) is a generically split binary motive with Poincar\'e polynomial $1+t^{15}$ (the intersection of $\{0,6,15,21\}$ and $\{0,9,15,24\}$).

In summary, we have following motivic decompositions of $X_{1,6}$ and $X_2$ over $F$:
$$\Mot(X_2)\simeq
\U(X_2)\oplus\U(X_2)\{6\}\oplus
N_1\text { and}$$
$$\Mot(X_{1,6})\simeq
\U(X_{1,6})\oplus\U(X_{1,6})\{9\}\oplus N_2$$
where $N_1$ and $N_2$ are direct sums of positive Tate twists of $\U(X_{1,2,3,4,5,6})$ and $\cores_{K/F}(\Spec K)$, and the Poincar\'e polynomial of $\U(X_2)\simeq\U(X_{1,6})$ equals $1+t^{15}$.

Note also that over every field extension $L/F$ the motive $\U(X_2)\simeq\U(X_{1,6})$ is split if and only if $X_{1,6}$ (or $X_2$) has a zero-cycle over $L$ of odd degree.

Finally, we have for the Poincar\'e polynomial of $X_2$ (see \cite[Proposition~6.1]{GZ26}):

$$P(X_2,t)=\frac{(t^8-1)(t^{9}-1)(t^{12}-1)}{(t-1)(t^3-1)(t^4-1)}.$$

Comparing $P(X_2,t)$ with the conormed Poincar\'e polynomial $P_N(X_2,t)$ we immediately can recover all direct summands of $N_1$ isomorphic to Tate twists of $\cores_{K/F}(\Spec K)$. Since by the above considerations the remaining direct summands of $N_1$ are $\U(X_{1,2,3,4,5,6})$ (up to Tate twists), we can compute the exact Tate twists in decomposition~\eqref{eee} explicitly.
\end{proof}

\section{On the Rost invariant of groups of type $^2\E_6$ and $\E_7$}\label{sec5}

Let $r(G)$ denote the Rost invariant of a group $G$ with trivial Tits algebras of type $^2\E_6$ or $\E_7$ over a field $F$ of characteristic different from $2$. Since the Rost multiplier of the embedding $^2\E_6\to \E_7$ of quasi-split simply connected groups equals $1$, the respective groups of types $^2\E_6$ and $\E_7$ have the same Rost invariant.

\begin{thm}\label{rost2symbth}
Assume that $6r(G)=0$ and $3r(G)$ is a sum of two symbols. Then $3r(G)$ is a sum of two symbols with a common slot.
\end{thm}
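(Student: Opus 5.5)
We first reduce to the case $\E_7$: since the Rost multiplier of the embedding $^2\E_6\to\E_7$ is $1$, a group of type $^2\E_6$ with trivial Tits algebras embeds into a strongly inner group of type $\E_7$ with the same Rost invariant. So let $G$ be strongly inner of type $\E_7$ with $6r(G)=0$. Replacing $r(G)$ by $3r(G)$ changes nothing modulo $3$-primary phenomena, so we work with the $2$-torsion class $\rho=3r(G)\in H^3(F,\mu_2)$, assumed to be $\rho=\{a,b,c\}+\{d,e,f\}$. The goal is an identity $\rho=\{x,y,z\}+\{x,y',z'\}$.

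The strategy is to pass to the quadratic extension $L=F(\sqrt{a})$ (or any slot field of the first symbol). Over $L$ the class $\rho_L=\{d,e,f\}_L$ is a pure symbol. By \cite[Table~10.B]{GaPS16} (equivalently Proposition~\ref{lp24} in the $^2\E_6$ setting), a pure-symbol Rost invariant forces $J(G_L)\in\{(0,0,0,0),(1,0,0,0)\}$. In particular $G_L$ is isotropic up to odd degree extensions, with anisotropic kernel of a small type. Via Table~\ref{tabb2} and its $\E_7$ analogue from \cite{GaPS16}, one gets that $G_L$ contains, up to odd degree, a subgroup of type $^2\E_6$ quasi-split by a quadratic extension, or $\D_4$-type data.

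The key step is to descend this structure to $F$. We exploit the general fact for Galois cohomology: a class $\rho\in H^3(F,\mu_2)$ that becomes a symbol over $F(\sqrt a)$ satisfies $\rho=\{a\}\cdot\alpha+\beta$, where $\beta$ is a symbol. This follows from the exact sequence
\[
H^2(F,\mu_2)\xrightarrow{\cdot(a)}H^3(F,\mu_2)\to H^3(F(\sqrt a),\mu_2),
\]
together with the norm principle applied to the symbol $\{d,e,f\}_L$. That alone does not give a common slot. Instead, we will use the $\E_7$ structure: $G$ contains the subgroup of type $\A_1\times\F_4$ or arises from a Tits construction, giving a decomposition $\rho=r(\F_4\text{-part})+\{\text{quaternion}\}\cdot(\,\cdot\,)$.

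The cleanest route is probably to invoke the known result of Garibaldi--Petersson/Garibaldi--Petrov--Semenov that strongly inner $\E_7$ with $2$-torsion Rost invariant comes, up to odd degree extensions, from an $\A_1\times\F_4$ (or $\A_1\times\D_6$) Tits construction. Its Rost invariant then has the shape $\rho=f_3(J)+(q)\cup(\lambda)$, where $J$ is an Albert algebra with $f_3(J)=\{x,y,z\}$, $(q)$ is a quaternion class, and $\lambda\in F^\times$.

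Two-symbol length then puts strong constraints on these terms. The Arason invariant $e_3$ of $\langle\langle x,y,z\rangle\rangle$ plus a symbol with a quaternion slot is a sum of two symbols. By linkage theory for $3$-fold Pfister forms (Elman--Lam), their sum is a sum of two symbols exactly when the Albertian-type form $\langle\langle x,y,z\rangle\rangle\perp -\langle\langle u,v,w\rangle\rangle$ has Witt index $\ge 2$ modulo $I^4$. In that case the two Pfister forms are $1$-linked, which gives the common slot. Finally, odd degree extensions are removed by Springer's theorem for Pfister forms.

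The main obstacle is this linkage step. Elman--Lam guarantees that two anisotropic $3$-fold Pfister forms whose difference lies in $I^3$ with small dimension are linked. However, "sum of two symbols" alone does not directly say the two symbols are linked. I would try first to use the group: over $F(\rho\text{'s first Pfister quadric})$ the invariant becomes a symbol, and Table~\ref{tabb2}-type index data force a rational $^2\E_6$-subgroup. A generic-point/De Clercq (Proposition~\ref{declercq}) argument on the motive then yields a Rost motive over $F$ whose symbol divides $\rho$ in a common slot.
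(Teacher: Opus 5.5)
Your proposal never proves the one nontrivial point: that the two symbols can be chosen linked. Passing to $F(\sqrt a)$ just gives back the hypothesis $\rho=\{a\}\cdot\{b,c\}+\{d,e,f\}$, and each of the three routes you then sketch fails.

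First, the structural claim is not an available result. You assert that $G$ comes, up to odd degree, from an $\A_1\times\F_4$ Tits construction with $\rho=f_3(\mathcal A)+(\mathcal Q)\cup(\lambda)$. For that construction this cannot hold in general. There $\mathcal Q$ is Brauer equivalent to the Tits algebra, so for strongly inner $G$ it is split. The output is then split or has anisotropic kernel of type $\D_4$, so its Rost invariant is a single symbol. Even granting a formula $\{x,y,z\}+\{u,v,\lambda\}$, that is merely a sum of two symbols with no common slot.

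Second, the linkage step is circular. Elman--Lam says $\langle\langle a,b,c\rangle\rangle$ and $\langle\langle d,e,f\rangle\rangle$ are $1$-linked iff $i_W(\langle\langle a,b,c\rangle\rangle\perp-\langle\langle d,e,f\rangle\rangle)\ge 2$. Nothing in your argument verifies this, and being a sum of two symbols does not give it: for a generic pair the Witt index is $1$. This inequality is exactly what the existence of $G$ has to force.

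Third, the closing appeal to De~Clercq's Proposition~\ref{declercq} names no variety, summand or mechanism that would produce a common slot. Also, a pure-symbol Rost invariant corresponds to $J=(0,1,0,0)$, not $(1,0,0,0)$: the degree-$1$ entry records the Tits algebra and vanishes for strongly inner groups.

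The missing idea is to let $G$ see the two symbols through a quadratic form and compare upper motives. Modulo $I^4$, write $3r(G)$ as $\langle\langle a,b,c\rangle\rangle-\langle\langle d,e,f\rangle\rangle$, i.e.\ as a $14$-dimensional form $q\in I^3$. Let $X$ be the variety of Borel subgroups of $G$ and $Y$ that of $\Spin(q)$.

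Over $F(Y)$ the form $q$ is hyperbolic, so $r(G)$ vanishes, and $G$ splits by Garibaldi's theorem that the Rost invariant has trivial kernel. Over $F(X)$ the form $q$ lies in $I^4$, so it is hyperbolic by the Arason--Pfister Hauptsatz, since $14<16$. Hence $\U(X)\simeq\U(Y)$ and the $J$-invariants coincide.

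The $J$-invariant of $\Spin_{14}$ has entries only in degrees $3$ and $5$, so the degree-$9$ entry of $J(G)$ vanishes. Table~10.B of Garibaldi--Petrov--Semenov then gives an anisotropic kernel of type $\D_6$ or smaller. This reduces everything to a $12$-dimensional form in $I^3$. By Pfister's theorem that form is $\langle\langle x\rangle\rangle$ times an Albert form, so $3r(G)=\{x,y,z\}+\{x,y',z'\}$.
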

\begin{proof}
By assumptions we can consider the Rost invariant as an element in $${H^3(F,\mu_2)\simeq I^3/I^4},$$ where $I$ denotes the fundamental ideal of the Witt ring of $F$.

Since the Rost invariant is a sum of two symbols, we can represent it modulo $I^4$ by a sum of two $3$-fold Pfister forms. Since we are working modulo $I^4$ we can replace the sum of two Pfister forms by a difference of two Pfister forms. In particular, we can represent the Rost invariant by a $14$-dimensional quadratic form $q$ from $I^3$ modulo $I^4$.

Let $X$ denote the variety of Borel subgroups of the group $G$ and let $Y$ denote the variety of Borel subgroups of a group of type $\D_7$ defined by the quadratic form $q$.

Passing to the function field $F(Y)$ of $Y$ we make the form $q$ hyperbolic and the Rost invariant of $G$ trivial. Therefore, by \cite[Theorem~0.5]{Ga01a} the group $G$ splits over $F(Y)$ and the variety $X$ becomes isotropic.

Conversely, passing to the function field $F(X)$ of $X$ we split the group $G$. The Rost invariant becomes zero, and, therefore, the form $q$ becomes zero modulo $I^4$. But by the Arason--Pfister Hauptsatz all non-zero anisotropic forms from $I^4$ have dimension at least $2^4=16$. Since $q$ is $14$-dimensional, it splits completely over $F(X)$, and, thus, the variety $Y$ becomes isotropic.

It follows now that the upper motives of $X$ and $Y$ are isomorphic. These upper motives are determined by the values of the $J$-invariants of the respective groups. In particular, the group $G$ and the quadratic form $q$ have the same $J$-invariants. For groups of type $\D_7$ we have that the $J$-invariant is of the form $(j_1,j_2)$ with $d_1=3$, $d_2=5$, $j_1\le 2$, $j_2\le 1$ (note that since $q\in I^3$, the respective group of type $\D_7$ is strongly inner and we take the parameters of the $J$-invariant for $\Spin_{14}$).

On the other hand, the $J$-invariant of $G$ has entries of degrees $3$, $5$, and $9$. Therefore, its entry of degree $9$ must vanish. But it follows then from \cite[Table~10.B]{GaPS16} that $G$ is isotropic with anisotropic kernel of type $\D_6$ (or smaller: $\D_4$ or trivial). But then we are reduced to a $12$-dimensional quadratic form from $I^3$, where it is well known that there is a common slot.
\end{proof}

This theorem allows to formulate an isotropy criterion for groups of type $\E_7$ in a more optimal way. Recall (see \cite{GaPS16}) that Markus Rost and Tonny Springer independently raised the general question of possible relations between the Rost invariant and the rationality of parabolic subgroups for groups of type $\E_7$. This question was settled in \cite{GaPS16}.

\begin{cor}\label{cor52}
Let $G$ be a group of type $\E_7$ over a field $F$ with trivial Tits algebras. Then $G$ is isotropic over an odd degree field extension of $F$ if and only if $6r(G)=0$ and $3r(G)$ is a sum of at most two symbols.
\end{cor}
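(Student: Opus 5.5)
We prove the two implications separately. Throughout we use that passing to an odd degree extension changes neither the triviality of $6r(G)$ nor the symbol length of the $2$-primary part $3r(G)$. The reason is that restriction followed by corestriction is multiplication by an odd number, so it is injective on $2$-torsion classes. Consequently every $2$-primary statement may be checked after such an extension.

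\emph{Necessity.} Suppose $G$ becomes isotropic over an odd degree extension $L/F$. By the remark above we may replace $F$ by $L$ and assume that $G$ itself is isotropic. We then run through the Tits indices of isotropic strongly inner groups of type $\E_7$ (Tits' classification), together with the table of Rost invariants in \cite[Table~10.B]{GaPS16} or \cite{Ga01a}. The possible semisimple anisotropic kernels are $\E_6$, $\D_6$, $\D_4$, $\D_4\times\A_1$, $\A_1^{3}$, trivial, and so on. In each case the Rost invariant is the Rost invariant of the anisotropic kernel, transported with Rost multiplier $1$.

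For kernels of type $\D_6$ or smaller orthogonal type, $r(G)$ is the Arason invariant $e_3$ of a $12$-dimensional form in $I^3$. Such a form is similar to $\langle\langle a\rangle\rangle\otimes(\text{4-dimensional form})$, so $e_3$ is a sum of two symbols and is $2$-torsion. This gives $6r(G)=0$ and $3r(G)=r(G)$ a sum of at most two symbols.

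The kernel of type $^1\E_6$ needs a separate argument. An anisotropic strongly inner $^1\E_6$ kernel has $3r\neq 0$ with $2r=0$? No: it comes from a first Tits construction. After killing the $3$-part, which can be done by an extension of degree prime to $2$, the group becomes isotropic with a smaller kernel. So up to an odd degree extension we are again in the orthogonal case.

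\emph{Sufficiency.} Suppose $6r(G)=0$ and $3r(G)$ is a sum of at most two symbols.

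First, if $3r(G)=0$, then $r(G)$ is $3$-torsion. The Rost invariant detects splitting for strongly inner $\E_7$ after a prime-to-$2$ extension (\cite[Theorem~0.5]{Ga01a}), and killing the $3$-part needs only a degree-$3$-power extension. Hence $G$ splits over an odd degree extension.

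Second, if $3r(G)\neq 0$, we first pass to an odd degree extension that kills the $3$-part of $r(G)$, so that $r(G)=3r(G)$. Theorem~\ref{rost2symbth} then shows that $r(G)$ is a sum of two symbols with a common slot. If there is only one symbol, we use the same argument with $q$ a $3$-fold Pfister form. Thus $r(G)=e_3(q)$ for an anisotropic $12$-dimensional form $q=\langle\langle a\rangle\rangle\otimes\langle\!\langle\,\cdot\,\rangle\!\rangle'$ in $I^3$.

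Next, let $X$ be the variety of Borel subgroups of $G$ and $Y$ that of $\Spin(q)$, and repeat the argument of Theorem~\ref{rost2symbth}. Over $F(Y)$ the Rost invariant vanishes, so $G$ splits. Over $F(X)$ we have $q\in I^4$ with $\dim q=12<16$, so $q$ is hyperbolic by the Hauptsatz. Hence the upper motives of $X$ and $Y$ coincide, and $J(G)=J(\Spin(q))$ has vanishing degree-$9$ entry. By \cite[Table~10.B]{GaPS16}, a group whose $J$-invariant has vanishing degree-$9$ entry is isotropic up to odd degree extensions.

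The main obstacle is this last step: extracting isotropy, and not merely a motivic statement, from the vanishing of the degree-$9$ entry of the $J$-invariant. The input from \cite{GaPS16} plays exactly this role; if its table is not sufficient, one falls back on comparing upper motives with the variety $X_7$.
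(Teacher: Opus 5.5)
The sufficiency half of your proposal works. After an odd degree extension that kills the $3$-primary part of $r(G)$, you apply Theorem~\ref{rost2symbth}, compare upper motives, and finish with \cite[Table~10.B]{GaPS16}. This is the same input the paper uses. The proof of Theorem~\ref{rost2symbth} already ends in isotropy, so rerunning it with a $12$-dimensional form is redundant. Also, the extension killing the $3$-part only needs odd degree, e.g.\ a finite subextension of a $2$-special closure; there is no reason it can be taken of $3$-power degree.

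The gap is in the necessity half. Your opening remark claims that an odd degree extension $L/F$ does not change whether $3r(G)$ is a sum of at most two symbols, and you justify this by $\cores\circ\mathrm{res}=[L:F]$. That identity only shows that restriction is injective on $2$-primary classes. This suffices for the condition $6r(G)=0$. It does not let you descend a decomposition $3r(G)_L=s_1+s_2$ into symbols over $L$ to one over $F$, because corestrictions of symbols are not symbols, and no descent of symbol length in $H^3(F,\mu_2)$ follows formally. Your argument relies on exactly this descent when you replace $F$ by $L$, and again in the $^1\E_6$-kernel case.

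What is missing is an invariant that is insensitive to odd degree extensions but controls $r(G)$ over $F$ itself. The mod-$2$ $J$-invariant does this. If $G_L$ is isotropic, then $J(G)=J(G_L)$ has vanishing degree-$9$ entry. The description of $J$-values in \cite[Table~10.B]{GaPS16} then gives, over $F$, that $6r(G)=0$ and that $3r(G)$ is a sum of two symbols with a common slot. This is the content of \cite[Theorem~10.18]{GaPS16}, which is where the paper gets this direction; Theorem~\ref{rost2symbth} is only needed for the converse.

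Some smaller points:
\begin{itemize}
\item With trivial Tits algebras, the kernels $\D_4\times\A_1$ and $\A_1^3$ do not occur.
\item An anisotropic strongly inner $^1\E_6$ kernel comes from a division Albert algebra $\mathcal{A}$ with $r=f_3(\mathcal{A})+g_3(\mathcal{A})$. Hence $3r(G)=f_3(\mathcal{A})$ is directly a pure symbol, and no extension is needed in that case.
\item A $12$-dimensional form in $I^3$ is similar to $\langle\langle a\rangle\rangle\otimes\psi$ with $\psi$ $6$-dimensional, not $4$-dimensional.
\end{itemize}
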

\begin{proof}
This is a combination of
Theorem~\ref{rost2symbth}
and \cite[Theorem~10.18]{GaPS16}.
\end{proof}

Now we can give a different proof of the following result of Petrov and Rigby (see \cite[Theorem~4.4]{PR22}). The original proof used the structure of Cartan symmetric spaces for groups of type $\E_8$ and many explicit computations related to Lie algebras of type $\E_8$. Our approach does not use these techniques.

\begin{cor}[Petrov--Rigby]\label{peri}
Let $F$ be a field which has no non-trivial field extensions of odd degree. Let $G$ be a group of type $\E_8$ arising from the Tits construction. Then the semisimple anisotropic kernel of $G$ cannot be of type $\E_7$.
\end{cor}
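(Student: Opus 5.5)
Assume, for contradiction, that $G$ comes from the Tits construction and that its semisimple anisotropic kernel $H$ is of type $\E_7$. The Tits index of $\E_8$ with anisotropic kernel $\E_7$ has the single circled vertex $8$. So $H$ is the derived subgroup of a Levi subgroup of type $\E_7$. It is strongly inner, because the Tits algebras of $G$ (type $\E_8$) are trivial and the relevant Tits algebras of $H$ come from a quasi-split ambient Levi subgroup. The restriction of the Rost invariant to this Levi subgroup has Rost multiplier $1$, so $r(H)=r(G)$. Since $F$ has no nontrivial odd degree extensions, it is $2$-special. Then $r(G)$ is $2$-primary, and $3r(H)$ is just the Rost invariant in $H^3(F,\mu_2)$. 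Moreover, isotropy of $H$ up to odd degree extensions is the same as isotropy of $H$ over $F$.

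Next I will extract a cohomological constraint from the Tits construction. A group of type $\E_8$ from the Tits construction $T(C,J)$, with $C$ an octonion algebra and $J$ an Albert algebra, has Rost invariant $r(G)=r(C)+r(J)$, where $r(C)$ is the $3$-Pfister symbol of $C$ and $r(J)$ is the Rost invariant of the Albert algebra. Over a $2$-special field, $J$ has no $3$-torsion invariant, so $r(J)=f_3(J)$ is a single $3$-fold symbol. Hence $r(G)$ is a sum of at most two symbols modulo $2$, and so is $r(H)=r(G)$. The relation $6r(H)=0$ is automatic here, since $2r=0$ for $2$-primary classes of this kind.

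Corollary~\ref{cor52} now applies to $H$, a strongly inner group of type $\E_7$. It shows that $H$ is isotropic over an odd degree extension of $F$, hence over $F$ itself. This contradicts the assumption that $H$ is the anisotropic kernel, so the anisotropic kernel of $G$ is strictly smaller than $\E_7$.

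I expect the main obstacle to be the justification that the Rost invariant of a Tits construction group of type $\E_8$ is a sum of two symbols. The cleanest route is to use the subgroup $G_2\times F_4\subset\E_8$ given by the Tits construction, whose Rost multipliers are both $1$. The Rost invariant then equals $r(C)+r(J)$, and one applies the known description of these summands over $2$-special fields. A secondary point is checking that passing to the anisotropic kernel preserves the Rost invariant and the triviality of Tits algebras; I would handle this via the Levi embedding $\E_7\subset\E_8$ with Rost multiplier $1$.
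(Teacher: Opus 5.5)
Your proof is correct and is essentially the paper's argument: the mod-$2$ symbol length of $r(G)$ for a Tits-construction $\E_8$ is at most $2$, this passes to the $\E_7$ anisotropic kernel $H$ with $r(H)=r(G)$, and Corollary~\ref{cor52} then makes $H$ isotropic over $F$ itself, since $F$ has no odd-degree extensions. The paper cites Garibaldi's memoir for the symbol-length bound and leaves $r(H)=r(G)$ and the strong innerness of $H$ implicit; you supply these via the Rost multipliers of $\G_2\times\F_4\subset\E_8$ and of the Levi $\E_7\subset\E_8$, but your phrase ``quasi-split ambient Levi'' is inaccurate (the Levi is not quasi-split), and the clean justification is that the split Levi is $(\E_7\times\mathbb{G}_m)/\mu_2$, so by Hilbert~90 the class of $G$ lifts to $H^1(F,\E_7^{\mathrm{sc}})$.
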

\begin{proof}
It is well known that the Rost invariant of groups of type $\E_8$ arising from the Tits construction has symbol length at most $2$ (see \cite[Section~11.6]{Ga09}).

Assume that the semisimple anisotropic kernel $H$ of $G$ is of type $\E_7$. Then the Rost invariant of $H$ also has symbol length at most $2$ and, thus, by Corollary~\ref{cor52} is isotropic. Contradiction.
\end{proof}

\begin{rem} Analogous statements are known for other Tits constructions.

The semisimple anisotropic kernel for groups of type $\E_6$ arising from the $\mu_2\times \F_4$ Tits construction is never of type $^2\A_5$ (see \cite{GaPe07}).

For groups of type $\E_7$ arising from the $\A_1\times\F_4$ Tits construction the semisimple anisotropic kernel is never of type $\D_6$ (see \cite{He23}).
\end{rem}

\begin{rem}
In \cite[p.~60]{Ti66} Tits added a question mark to groups of type $\E_8$ with the semisimple anisotropic kernel of type $\E_7$. Later in \cite{Ti90} Tits proved the existence of such groups over the field $\qq_p(t)$.
Note that by \cite[Corollary~10.17]{GaPS16} the group constructed by Tits has the Rost invariant whose even component is a $4$-torsion, but not a $2$-torsion, and this explains the complexity of the proof of Tits \cite{Ti90}.

On the other hand, there is a simplified proof of the existence of this kind of groups of type $\E_8$ using the $J$-invariant. It was given by Petrov and Stavrova in \cite[Section~8]{PSt11} and their method is uniform for groups of all types.
\end{rem}

\section{Groups arising from the Allison--Faulkner construction}\label{secc5}

In this section we briefly recall a construction of algebraic groups and Lie algebras introduced in \cite{AF93}, which we call the Allison--Faulkner construction.

The Allison--Faulkner construction was extensively examined in \cite{R22} and \cite{PR22}. The construction takes a so called {\it structurable algebra} $(A,-)$ and an element $\gamma \in (F^\times/F^{\times 2})^3$ as input. In contrast to the famous Tits construction it is more symmetric.
Over $2$-special fields all outputs of the Tits construction can also be obtained by the Allison--Faulkner construction.

In the following table we list some structurable algebras and conditions which are needed to produce the same groups as the ones from the Tits constructions. We omit the $\F_4$ case and write $\mathcal{Q}$, $\mathcal{O}$, $\mathcal{A}$ for quaternion, octonion and Albert algebras. For a scalar $a\in F^\times$ we write $(a)$ for the respective \'etale quadratic algebra over $F$ or for its class in $H^1(F,\mu_2)$.

A bioctonion\footnote{Another name of them is octooctonions.} algebra is a twisted form of the tensor product of two octonion algebras. Bioctonion algebras are classified by $H^1(F, (\G_2 \times \G_2) \rtimes \mu_2)$ (see \cite[Table~5]{R22}). If the outer action of $\mu_2$ is trivial, we speak of a decomposable bioctonion algebra.

\begin{longtable}{c|c|c|c}
\caption{Exceptional groups via Tits and Allison--Faulkner constructions}
\label{tab:tits-af}\\

Group & Tits construction & Input AF construction & Condition\\
\hline
\endfirsthead

Group & Tits construction & Input AF construction & Condition\\
\hline
\endhead

$\E_6$
& $\mu_2\times\F_4$ or $\G_2\times\A_2$
& $(a)\otimes\mathcal{O}\times\gamma$
& \\

$\E_7$
& $\A_1\times\F_4$ or $\G_2\times\C_3$
& $\mathcal{Q}\otimes\mathcal{O}\times\gamma$
& \\

$\E_8$
& $\G_2\times\F_4$
& $\text{bioctonions}\times\gamma$
& Decomposable bioctonions \\
\end{longtable}

The input of the Allison--Faulkner construction makes it more visible that in the case of the two Tits constructions of $\E_7$ (or $\E_6$), basically the same information is used as input.
In one case the octonion algebra plus $\gamma$ gives the Albert algebra via construction of an (exceptional) Jordan algebra (see \cite[Section~37.C]{KMRT}), which yields the $\A_1\times\F_4$ Tits construction.
In the other case the quaternion algebra plus $\gamma$ gives a degree six central simple algebra with a symplectic involution, which yields the $\G_2\times\C_3$ Tits construction.

In \cite[Chapter~12]{He23} Henke constructed a cohomological invariant $h_5\in H^5(F,\mu_2)$ of degree $5$ of groups $G$ of type $\E_7$ with the $J$-invariant $(1,0,0,0)$. This invariant is zero if and only if the variety of maximal parabolic subgroups of $G$ of type $1$ has a zero-cycle of odd degree.

Let now $G$ be a group of type $\E_7$ arising from the $\A_1\times \F_4$ Tits construction. The input of this construction consists of a quaternion algebra $\mathcal Q$ and an Albert algebra $\mathcal A$. If we assume that the Albert algebra $\mathcal A$ splits over the function field of the Severi--Brauer variety $\SB(\mathcal Q)$, then the $J$-invariant of $G$ will be $(1,0,0,0)$. Henke shows in \cite[Theorem~12.3.6]{He23} that for such groups the invariant $h_5$ coincides with the $f_5$-invariant of the Albert algebra $\mathcal A$.

The following proposition makes a connection with the $\G_2\times\C_3$ Tits construction of groups of type $\E_7$. The input of this construction consists of an octonion algebra and a central simple algebra of degree $6$ with a symplectic involution.
 
\begin{prop}\label{prop61}
The Henke $h_5$-invariant of groups of type $\E_7$ is defined for the output of the $\G_2\times\C_3$ Tits construction, if the respective octonion algebra splits over the function field of the Severi--Brauer variety $\SB(\mathcal{Q})$, where $\mathcal Q$ is the quaternion algebra corresponding to $\C_3$.
 \end{prop}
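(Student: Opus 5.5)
The plan is to show that the output $G$ of the $\G_2\times\C_3$ Tits construction satisfies the hypothesis under which Henke's invariant is defined. That hypothesis is that $G$ is a group of type $\E_7$ with $J$-invariant $(1,0,0,0)$, or with trivial $J$-invariant, in which case $h_5$ is simply zero. The cleanest route is to pass through the Allison--Faulkner description in Table~\ref{tab:tits-af}.

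\emph{Step 1: reduce to the $\A_1\times\F_4$ construction.} The $\G_2\times\C_3$ construction takes an octonion algebra $\mathcal O$ and a degree $6$ algebra with symplectic involution, which is itself built from $\mathcal Q$ and $\gamma$. Hence its output is the Allison--Faulkner group attached to $\mathcal Q\otimes\mathcal O\times\gamma$. The same input, read the other way, feeds $\mathcal O$ and $\gamma$ into the first Tits construction of an Albert algebra $\mathcal A$ (\cite[Section~37.C]{KMRT}). This exhibits $G$ as the output of the $\A_1\times\F_4$ Tits construction with input $(\mathcal Q,\mathcal A)$. In particular $G$ is strongly inner, and Henke's framework from \cite[Chapter~12]{He23} applies.

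\emph{Step 2: check that $\mathcal A$ splits over $F(\SB(\mathcal Q))$.} Over $L=F(\SB(\mathcal Q))$, by hypothesis $\mathcal O_L$ is split. An Albert algebra obtained by the Tits process from a split octonion algebra (with any $\gamma$) is reduced with split coordinate octonions. Its $f_3$ is the norm form of $\mathcal O$, which vanishes. Its $f_5=f_3\cdot\langle\langle\gamma\rangle\rangle$ also vanishes. Its $g_3$ is trivial because $\mathcal A$ is reduced. So $\mathcal A_L$ is split.

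\emph{Step 3: deduce the $J$-invariant and conclude.} Now we are exactly in the situation discussed before the proposition: an $\A_1\times\F_4$ output whose Albert algebra splits over $F(\SB(\mathcal Q))$. Over $L$ the group $G_L$ is split, so the upper motive of the variety of Borel subgroups is that of the conic $\SB(\mathcal Q)$, or rather of the associated Pfister quadric. Hence $J(G)$ is $(1,0,0,0)$ when $\mathcal Q$ is nonsplit and $(0,0,0,0)$ otherwise. Therefore $h_5$ is defined, and by \cite[Theorem~12.3.6]{He23} it equals $f_5(\mathcal A)$, which can be written as $f_3(\mathcal O)\cdot\langle\langle\gamma\rangle\rangle$.

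\emph{Main obstacle.} I expect Step 1 to be the hardest: justifying rigorously that both Tits constructions attached to the same Allison--Faulkner input $\mathcal Q\otimes\mathcal O\times\gamma$ produce isomorphic groups. I would cite \cite{AF93}, \cite{R22} and \cite{PR22} for this identification. As a fallback, I would argue directly via the $J$-invariant. Over $F(\SB(\mathcal Q))$ the $\C_3$ input becomes split, so the $\G_2\times\C_3$ output reduces to a split-$\C_3$ Tits construction. Together with $\mathcal O$ splitting, this forces $G$ to be split there, which gives $J(G)\le(1,0,0,0)$ without passing through $\mathcal A$.
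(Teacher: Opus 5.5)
Your fallback argument is essentially the paper's proof. The paper notes that both inputs of the $\G_2\times\C_3$ construction split over $F(\SB(\mathcal Q))$, so $G$ splits there. Since $\mathcal Q$ is Brauer equivalent to the Tits algebra of $G$, \cite[Lemma~7.1.9]{He23} gives $J(G)\in\{(1,0,0,0),(0,0,0,0)\}$, and \cite[Proposition~12.3.1]{He23} then says $h_5$ is defined.

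Your main route is different. It moves $G$ to the $\A_1\times\F_4$ side through the symmetry $\mathcal T(\mathcal O,H_3(\mathcal Q,\gamma))\simeq\mathcal T(\mathcal Q,H_3(\mathcal O,\gamma))$ behind the Allison--Faulkner construction, and then invokes the situation discussed just before the proposition. This works, but the step you call the main obstacle is not needed for the statement. The splitting of $G$ over $F(\SB(\mathcal Q))$ is visible directly from the $\G_2\times\C_3$ inputs, and your Step~3 uses nothing more than that. What the detour buys is more than the proposition asserts: via \cite[Theorem~12.3.6]{He23} it gives the explicit value $h_5(G)=f_5(H_3(\mathcal O,\gamma))$. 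That is the norm form of $\mathcal O$ times a $2$-fold Pfister form in the $\gamma_i$.

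One correction: $G$ is \emph{not} strongly inner in general. On either side of the construction its Tits algebra is Brauer equivalent to $\mathcal Q$. A strongly inner group of type $\E_7$ has $j_1=0$, so your claim contradicts your own conclusion $J(G)=(1,0,0,0)$ in Step~3.

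The correct fact is also exactly what Step~3 needs. To identify the upper motive of the variety $X$ of Borel subgroups with $\Mot(\SB(\mathcal Q))$, it is not enough that $G$ splits over $F(\SB(\mathcal Q))$. You also need $\mathcal Q$ to split over $F(X)$, which holds because $\mathcal Q$ is the Tits algebra.

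A terminology slip: $H_3(\mathcal O,\gamma)$ is the reduced Albert algebra of \cite[\S37.C]{KMRT}, not a first Tits construction or Tits process.
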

\begin{proof}
Let $G$ be the output of the Tits construction.
It is well known that $\mathcal Q$ is Brauer equivalent to the Tits algebra of $G$.

It follows from the assumptions that the group $G$ splits over $\SB(\mathcal{Q})$. Therefore, by \cite[Lemma~7.1.9]{He23} the $J$-invariant of $G$ is $(1,0,0,0)$ or $(0,0,0,0)$. In both cases the invariant $h_5$ is defined by \cite[Proposition~12.3.1]{He23}.
\end{proof}

\section{On the $J$-invariant of groups of type $^2\E_6$ and $\E_7$}\label{sec7}

It follows from Proposition~\ref{lp24} that the $J$-invariant of a compact real group of type $^2\E_6$ is $(1,0,0)$. On the other hand, a generic group of type $^2\E_6$ has a maximal value of the $J$-invariant, namely $(1,1,1)$. Note also that for the entries $(j_1,j_2,j_3)$ of the $J$-invariant we always have $1\ge j_1\ge j_2\ge j_3\ge 0$.

In this section we will show that there exist anisotropic groups of type $^2\E_6$ with the remaining possible $J$-invariant $(1,1,0)$. This case corresponds precisely to the situation of Theorem~\ref{rost2symbth}.

\begin{prop}\label{j110}
Let $G$ be a simply connected group of type $^2\E_6$ with trivial Tits algebras over a field $F$ with the maximal $J$-invariant $(1,1,1)$. Let $H$ be the respective strongly inner group of type $\E_7$, where $G\le H$ has the Rost multiplier $1$, and let $Y_1$ be the variety of maximal parabolic subgroups of $H$ of type $1$. Then the group $G$ is $2$-anisotropic over $F(Y_1)$
and has the $J$-invariant $(1,1,0)$.
\end{prop}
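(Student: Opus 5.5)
We work throughout modulo $2$ and, where needed, replace the base field by a $2$-special closure. The plan is to compute the $J$-invariant of $G_{F(Y_1)}$ first and then read off $2$-anisotropy from it.

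\emph{Upper bound on the $J$-invariant.} Since $G\le H$ has Rost multiplier $1$, the groups $G$ and $H$ have the same Rost invariant. By \cite[Section~7, case $^2\E_6$]{GZ26} they also have the same $J$-invariant, so $J(H)=(j_1,1,1,1)$ in degrees $1,3,5,9$; since $H$ is strongly inner, $j_1=0$. Over $F(Y_1)$ the group $H$ is isotropic, and its anisotropic kernel is contained in a group of type $\E_6$. By Table~10.B of \cite{GaPS16}, or directly from the entries allowed for $^1\E_6$ in \cite[Section~4.13]{PSZ08}, the degree-$9$ entry of the $J$-invariant then vanishes. Hence $J(G_{F(Y_1)})=J(H_{F(Y_1)})$ has third entry $0$.

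\emph{Lower bound.} We must show that the entries of degrees $3$ and $5$ survive. I would use the standard fact that, under passage to the function field of $Y_1$, the $J$-invariant drops only in those degrees $d_i$ that are compatible with the Chow ring of $\overline{Y_1}$. Concretely, we compare Poincaré polynomials: by \cite[Section~4]{PSZ08}, $\Mot(Y_1)$ over $F$ contains Tate twists of the upper motive of the Borel variety. Alternatively, we can argue that $R_2(H)$ restricted to $F(Y_1)$ still has a factor $(1+t^3)(1+t^5)$. The following route is cleaner and is the one I would take. The condition $J=(1,1,0)$ corresponds, via the proof of Theorem~\ref{rost2symbth} read in reverse together with \cite[Table~10.B]{GaPS16}, to $3r$ being a sum of two symbols that is not a single symbol. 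We check instead that $J\ne(1,0,0)$ and $J\ne(0,0,0)$ over $F(Y_1)$. Since $j_1\ge j_2\ge j_3$, this leaves only $(1,1,0)$. By Proposition~\ref{lp24}, $J=(1,0,0)$ or $(0,0,0)$ means that $r(G)_{F(Y_1)}$ is a pure symbol or zero. So we must show that $r(H)$ does not become a symbol over $F(Y_1)$. For this I would use the index reduction and genericity: $Y_1$ is a projective homogeneous variety whose motive decomposes into the upper motive of $Y_1$ together with twists of $R_2(H)$. If $r$ became a symbol over $F(Y_1)$, the corresponding Rost motive would force $J_{F(Y_1)}$ to have its degree-$5$ entry equal to $0$. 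By De Clercq's Proposition~\ref{declercq} (or Karpenko's upper motive principle) this would produce a summand $\U(X_{\text{Borel}})$ over $F$ of dimension smaller than allowed by $J(G)=(1,1,1)$, a contradiction. This step, showing that the degree-$5$ entry cannot drop over $F(Y_1)$, is the main obstacle. I would try to settle it by comparing the generic point of $Y_1$ (of dimension $33$) with the degrees in which $\Ch^*(\overline{Y_1})$ is generated, using that the generators $e_3,e_5$ of the Borel variety stay non-rational over $F(Y_1)$ because $\Ch^{\le 5}$ of $\overline{Y_1}$ contains no rational cycles beyond the hyperplane class.

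\emph{$2$-anisotropy.} Once $J(G_{F(Y_1)})=(1,1,0)$ is established, suppose $G$ were isotropic over a $2$-special closure $L$ of $F(Y_1)$. Then Table~\ref{tabb2} applies: $r(G)_L$ must be a pure symbol (the two middle rows) or zero. The first row is excluded since $J\ne0$. In the row where $r$ is not a pure symbol, the anisotropic kernel is of type $^2\A_5$ with trivial Tits algebras, which is impossible here; rather, I would use Proposition~\ref{lpr25} and Proposition~\ref{lp24} to show that in every isotropic case $J\in\{(1,0,0),(0,0,0)\}$. Indeed, an isotropic group of type $^2\E_6$ with trivial Tits algebras has anisotropic kernel of type $^2\A_5$, $^2\A_3$, or smaller. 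For these the degree-$5$ entry vanishes, because the upper bounds in \cite[Table~5]{GZ26} for the kernels only allow degree $3$ (and degree $5$ only for $^2\A_5$, which needs nontrivial Tits algebras by the last row of Table~\ref{tabb2}). This contradicts $j_2=1$, so $G$ is $2$-anisotropic over $F(Y_1)$.
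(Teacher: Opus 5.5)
\textbf{The $2$-anisotropy step fails.} You try to deduce $2$-anisotropy from $J(G_{F(Y_1)})=(1,1,0)$, but this rests on a misreading of Table~\ref{tabb2}.

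\begin{itemize}
\item The row with only vertex $2$ circled has anisotropic kernel of type $^2\A_5$. It does occur with split Tits algebras, precisely when $r(G)$ is not a pure symbol.
\item The row declared impossible is a different index, the one with vertices $2$ and $4$ circled.
\item An isotropic group with kernel $^2\A_5$ has $J$ equal to exactly $(1,1,0)$. It is at most $(1,1,0)$ by the $^2\A_5$ bounds of \cite[Table~5]{GZ26}. It is not $(1,0,0)$ or $(0,0,0)$ by Proposition~\ref{lp24}, since $r(G)$ is not a pure symbol.
\end{itemize}

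So $J=(1,1,0)$ is compatible with isotropy, and no argument at the level of the $J$-invariant can yield anisotropy. That is the content of the proposition; compare the remark following it.

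The missing idea is the incompressibility of $Y_1$: by \cite[Proposition~10.20]{GaPS16}, $\dim\U(Y_1)=33$ because $J(H)=(0,1,1,1)$. The argument then runs as follows.
\begin{itemize}
\item Suppose $G$ became isotropic over an odd degree extension of $F(Y_1)$. Since $r(G)$ is not a pure symbol there, the kernel would be $^2\A_5$, so $X_2$ would have a zero-cycle of odd degree over $F(Y_1)$.
\item Conversely, $Y_1$ has a rational point over $F(X_2)$. Indeed, $H\supseteq G$ becomes isotropic there, and an isotropic strongly inner group of type $\E_7$ always has vertex $1$ circled.
\item Hence $\U(Y_1)\simeq\U(X_2)$. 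This is absurd, since $\dim X_2=21<33$.
\end{itemize}
This is exactly the paper's proof.

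\textbf{The lower bound on the $J$-invariant is not proved.} You flag this yourself, and the sketch does not work. The condition on $\Ch^{\le 5}(\overline{Y_1})$ is not the relevant object. Whether $e_3,e_5$ become rational over $F(Y_1)$ is governed by rational cycles on $\overline{X\times Y_1}$, where $X$ is the Borel variety. Your De Clercq argument also does not identify any summand whose existence over $F$ would contradict $J(G)=(1,1,1)$.

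The paper takes this step from \cite[Lemma~10.9 and Table~10.B]{GaPS16}. You could instead use the same incompressibility trick with $Y_7$, which has dimension $27$:
\begin{itemize}
\item Suppose the degree-$5$ entry of $J(H)$ vanished over $F(Y_1)$.
\item Then by Table~10.B of \cite{GaPS16}, $H_{F(Y_1)}$ would be split or have kernel $\D_4$ (index $\E_{7,3}^{28}$, with vertices $1,6,7$ circled) up to odd degree. So $Y_7$ would acquire an odd-degree zero-cycle over $F(Y_1)$.
\item Meanwhile $Y_1$ is isotropic over $F(Y_7)$.
\item Hence $\U(Y_1)\simeq\U(Y_7)$, which would have dimension at most $27<33$, a contradiction.
\end{itemize}

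\textbf{A minor slip.} Over $F(Y_1)$ the anisotropic kernel of $H$ lies in the $\D_6$ obtained by deleting vertex $1$, not in an $\E_6$. Your conclusion that the degree-$9$ entry vanishes is unaffected, since groups of type $\D_6$ have no $J$-invariant entry in degree $9$ (cf.\ Table~\ref{tab:Jmax}).
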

\begin{proof}
Since the $J$-invariant of $G$ equals $(1,1,1)$ over $F$, the group $G$ is anisotropic and remains anisotropic over every odd degree field extension of $F$.

We pass to the function field $F(X_2)$, where $X_2$ is the variety of maximal parabolic subgroups of $G$ of type $2$. In this case we reduce to the anisotropic kernel of type $^2\A_5$ or less. But analyzing the $J$-invariant for groups of type $^2\A_5$ (see \cite[Table~5]{GZ26}) we see that the $J$-invariant becomes $(1,1,0)$ or less. Therefore, by \cite[Table~10.B]{GaPS16} the variety $Y_1$ has a zero-cycle of odd degree over $F(X_2)$. (In fact, one can argue in a more direct way: The group $G$ is obviously isotropic over $F(X_2)$. Therefore, the bigger group $H$ is isotropic as well. The classification of Tits indices for strongly inner groups of type $\E_7$ by coincidence implies that the variety $Y_1$ is isotropic). 

Assume now that passing to the function field $F(Y_1)$ makes the group $G$ isotropic. The $J$-invariant over $F(Y_1)$ becomes $(1,1,0)$ by \cite[Lemma~10.9 and Table~10B]{GaPS16} and the Rost invariant becomes a sum of two symbols. Therefore, by the table in Step~1 of the proof of Proposition~\ref{t211pro} the anisotropic kernel of $G$ becomes of type $^2\A_5$ and the variety $X_2$ becomes isotropic.

It follows now that the upper motives of $X_2$ and $Y_1$ are isomorphic over $F$. But $\dim X_2=21$ which is strictly less than $\dim\,\U(Y_1)=\dim Y_1=33$ (see \cite[Proposition~10.20]{GaPS16}). We have arrived to a contradiction.
\end{proof}

\begin{rem}
Note that opposite to the $^2\E_6$-case the groups of type $\E_7$ with the $J$-invariants $(0,1,0,0)$ and $(0,1,1,0)$ are always isotropic over a field extension of odd degree.

Such situation is not common. For example, for analogous embeddings of simply connected groups $^3\D_4\to \F_4$ for $p=3$ and $\SU(W,h)\to\Spin(V,q)$ (i.e., $^2\A_{n-1}\to\D_n$) for $p=2$ both groups are simultaneously $p$-isotropic or $p$-anisotropic (and in the latter case one can even give a precise correspondence between the Witt indices of respective Hermitian and quadratic forms); see \cite[Proof of Proposition~8.1]{GZ26} and \cite[Lemma~9.1]{Ka12b}.
\end{rem}

\begin{rem}
Using similar arguments one can show that the group $G$ of Proposition~\ref{j110} remains anisotropic even over the field $F(Y_7)$. Over this field the $J$-invariant of $G$ equals $(1,0,0)$.
\end{rem}

\begin{rem}
Proposition 10.20 of \cite{GaPS16} states that the variety $Y_1$ is incompressible, if the $J$-invariant equals $(0,1,1,1)$. This is sufficient for the proof of Proposition~\ref{j110}. On the other hand, in \cite[Theorem~9.3.10]{He23} Henke showed using a coaction introduced in \cite{PS21} a more precise statement, namely that in this case
$$\Mot(Y_1)\simeq N\oplus\Big(\oplus_{i=2}^{14} R_2(G)\{i\}\Big),$$
where $R_2(G)$ is the upper motive of the variety of Borel subgroups of $G$ (its Poincar\'e polynomial equals $(1+t^3)(1+t^5)(1+t^9)$) and $N$ is an indecomposable motive with the Poincar\'e polynomial $(1+t^9)(1 + t + t^4 + t^6 + t^8 + t^{12} + t^{16} + t^{18} + t^{20} + t^{23} + t^{24})$.
\end{rem}

Now we investigate possible $J$-invariants for anisotropic adjoint groups $G$ of type $\E_7$.

Recall that the $J$-invariant of $G$ equals $(j_1,j_2,j_3,j_4)$ with $1\ge j_1\ge 0$, $1\ge j_2\ge j_3 \ge j_4\ge 0$.
If the group $G$ is strongly inner, then $j_1=0$ and the only possible $J$-invariant 
for a $2$-anisotropic $G$ is $(0,1,1,1)$.

The compact real group of type $\E_7$ has the $J$-invariant $(1,0,0,0)$.  Moreover, a generic group of type $\E_7$ has the $J$-invariant $(1,1,1,1)$. We show now that the $J$-invariant $(1,1,0,0)$ is also possible for certain groups $G$.

\begin{rem}
It seems to be unknown whether there exist anisotropic groups of type $\E_7$ with the $J$-invariant $(1,1,1,0)$.

As pointed out in \cite{He23}, every anisotropic group of type $\E_7$, which has the semisimple anisotropic kernel of type $\D_6$ over the generic point of the Severi--Brauer variety of its Tits algebra, has this $J$-invariant. Unfortunately, we are not aware whether such groups exist.
\end{rem}

\begin{prop}\label{prop76}
There exist $2$-anisotropic groups of type $\E_7$ with the $J$-invariant $(1,1,0,0)$. 
\end{prop}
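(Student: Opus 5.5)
Following the strategy of Proposition~\ref{j110}, I will start from a generic group and pass to a function field that lowers the $J$-invariant without making the group $2$-isotropic.

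\emph{Starting group and the candidate field.} Let $G$ be a group of type $\E_7$ with nonsplit Tits algebra and maximal $J$-invariant $(1,1,1,1)$; a versal torsor provides one. Let $Y_1$ be the variety of maximal parabolic subgroups of type $1$, and let $L=F(Y_1)$ be its function field. The Tits algebra of $\E_7$ has index dividing the index $2$ of the degree-$1$ vertex, and $Y_1$ is isotropic over $L$, so the Tits algebra splits over $L$. I need to check two things over $L$: the $J$-invariant becomes $(1,1,0,0)$, and $G_L$ is still $2$-anisotropic. The natural alternatives are to use $Y_7$ or the variety $X_{1,6}$ of the anisotropic kernel, and I would switch to one of these if the degree count below fails for $Y_1$.

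\emph{Computing the $J$-invariant over $L$.} Over $L$ the group is isotropic with a $\D_6$-type kernel or smaller. I will read off the $J$-invariant from the kernel, using \cite[Table~10.B]{GaPS16} and the $\D_6$ column of Table~\ref{tab:Jmax}. The entry of degree $9$ must vanish, because the $J$-invariant of $\D_6$ has no degree-$9$ entry. The entry of degree $5$ should also drop, by the $\A_1\times\F_4$-type behaviour noted above (maximal value $(1,1,0,0)$).

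A cleaner route is to start directly from the $\A_1\times\F_4$ Tits construction with a generic quaternion algebra $\mathcal Q$ and a generic Albert algebra $\mathcal A$. Its $J$-invariant is at most $(1,1,0,0)$. The entry $j_1=1$ holds because $\mathcal Q$, the Tits algebra, is nonsplit. The entry $j_2=1$ holds because the Rost invariant restricted to $F(\SB(\mathcal Q))$ is a generic sum of two symbols, which is not a pure symbol. So I will take $G$ to be this Tits construction with $\mathcal Q$ and $\mathcal A$ generic, for example over a field of iterated Laurent series or a versal base. Then $J(G)=(1,1,0,0)$ directly, because $j_1=1$ and $j_2=1$ and the maximum for this construction is $(1,1,0,0)$.

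\emph{Anisotropy, the main obstacle.} I must show that $G$ is $2$-anisotropic. Suppose $G$ becomes isotropic over an odd degree extension. For isotropic $\E_7$ with nontrivial Tits algebra the possible semisimple anisotropic kernels have type $\D_6$, $\D_4$, $\A_1^3$, and so on. By the remark after Corollary~\ref{peri} (Henke), the $\A_1\times\F_4$ construction never has kernel $\D_6$. The smaller kernels force the Tits algebra and the Rost invariant to be of special shape. In particular they force $\mathcal A$ to be reduced over an odd degree extension, while a generic Albert algebra (with $g_3\neq0$) stays a division algebra after odd degree extensions. Concretely, I would argue as follows. Over $F(\SB(\mathcal Q))$ the group becomes strongly inner with Rost invariant $g_3(\mathcal A)$ plus a symbol. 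Isotropy then forces that element to have symbol length at most $2$ with a common slot (Corollary~\ref{cor52}, Theorem~\ref{rost2symbth}). I expect the real difficulty is converting this into a statement over $F$ that contradicts genericity. This needs a specialization argument on residues of the Laurent-series variables to show the required common-slot relation fails.
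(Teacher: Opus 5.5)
\textbf{Main gap: the $2$-anisotropy of $G$ is never proved.} You flag this step yourself as ``the real difficulty'', and the route you sketch cannot close it.

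Over $F(\SB(\mathcal Q))$ the group is always isotropic. By \cite[Lemma~11.1.5]{He23} it is either split or has semisimple anisotropic kernel of type $\D_4$ there; this is also how the paper gets the upper bound $J\le(1,1,0,0)$, via \cite[Theorem~4.1]{Zh24}. So Theorem~\ref{rost2symbth} and Corollary~\ref{cor52} give no information over that field. Over $F$ they do not apply at all, because the Tits algebra $\mathcal Q$ is non-split.

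The appeal to $g_3(\mathcal A)$ fails too. $g_3$ is $3$-torsion and dies over a cubic extension: every Albert algebra becomes reduced over a suitable cubic extension. So it is invisible to any argument made up to odd degree extensions.

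The missing idea is the paper's specialization to $\mathbb{R}$. The generic $\A_1\times\F_4$ Tits construction specializes to the compact real group of type $\E_7$, which is anisotropic. Since $\mathbb{R}$ has no non-trivial odd degree extensions, a zero-cycle of odd degree on a variety of parabolic subgroups of $G$ would specialize to a zero-cycle of odd degree on the real variety, i.e.\ to a real point, which is impossible. The same device gives the lower bound on $J$: specializing to a real group with anisotropic kernel of type $\D_4$ yields $j_2\ge 1$, while $j_1=1$ because $\mathcal Q$ is non-split.

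\textbf{Two further points need correcting.}

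First, your reason for $j_2=1$ is wrong. Over $F(\SB(\mathcal Q))$ the mod $2$ Rost invariant is a pure symbol, since the kernel is $\D_4$ or the group is split. If it were a sum of two symbols that is not a pure symbol, then \cite[Table~10.B]{GaPS16} would give $J=(0,1,1,0)$ there. That would force $j_3=1$ over $F$, contradicting the upper bound $(1,1,0,0)$ you invoke. What is true is that $J$ can only drop under field extensions, so $j_2(G)\ge 1$ as soon as $G_{F(\SB(\mathcal Q))}$ is non-split modulo $2$.

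Second, the first route through $L=F(Y_1)$ is self-defeating. $Y_1$ has a rational point over its own function field, so $G_L$ is isotropic and cannot be $2$-anisotropic. Moreover, if the Tits algebra split over $L$ (which you assert without justification), you would also get $j_1=0$ there. In Proposition~\ref{j110} the variety $Y_1$ belongs to the ambient group $H$ of type $\E_7$, not to the group whose anisotropy is being tested.
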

\begin{proof}
Let $G$ be a group of type $\E_7$ obtained from the $\A_1\times\F_4$ Tits construction. Let $\mathcal Q$ be the respective quaternion algebra corresponding to $\A_1$. Then
by \cite[Lemma~11.1.5]{He23} the group $G$ over the function field $F(\SB(\mathcal{Q}))$ is either split or has the semisimple anisotropic kernel of type $\D_4$. In the latter case the $J$-invariant of $G_{F(\SB(\mathcal{Q}))}$ equals $(0,1,0,0)$. Therefore, by \cite[Theorem~4.1]{Zh24} the $J$-invariant of $G$ is at most $(1,1,0,0)$.

Let now $G$ be the output of a {\it generic} $\A_1\times\F_4$ Tits construction. The group $G$ is anisotropic, since we can specialize to the field of real numbers, where the output of the Tits construction is anisotropic.

Moreover, we can specialize $G$ to the field of real numbers, where the output of the Tits construction has the semisimple anisotropic kernel of type $\D_4$. Thus, the $J$-invariant of $G$ is at least $(1,1,0,0)$. Combining this with the considerations in the beginning of the proof we conclude that the $J$-invariant of $G$ is precisely $(1,1,0,0)$.
\end{proof}

\begin{rem}
We can look at the Killing form of the respective Allison--Faulkner construction of groups of type $\E_7$. It  
takes a quaternion algebra $\mathcal{Q}$, an octonion algebra $\mathcal{O}$ and $\gamma = (\gamma_1,\gamma_2,\gamma_3) \in (F^\times/F^{\times 2})^3$ as input.

We denote by $n_\mathcal{Q}$, $n_\mathcal{O}$ the norm forms of the respective algebras and let $n'_\mathcal{Q}, n'_\mathcal{O}$ denote their pure parts. Then by \cite[Theorem~28.1]{R22} the Killing form of the output $G$ is given by
 \begin{equation}
\mathcal{K}_G = \langle -1 \rangle (4n'_\mathcal{O} \perp 3\langle 2 \rangle n'_\mathcal{Q} \perp \langle \gamma_1\gamma^{-1}_2,\gamma_2\gamma^{-1}_3,\gamma_3\gamma^{-1}_1 \rangle n_\mathcal{O}n_\mathcal{Q}).
 \end{equation}

In particular, in general, the Killing form is not split over $F(\SB(\mathcal{Q}))$ and, therefore, the $J$-invariant of the respective group is $(1,1,0,0)$.
\end{rem}

\section{Freudenthal magic square}\label{sec8}

In this section we consider the Freudenthal magic square (see \cite[p.~540]{KMRT}). This is a table which contains the celebrated Tits constructions of algebraic groups and Lie algebras. We reproduce the Freudenthal magic square in Table~\ref{tab1}.

Under some additional conditions, which we specify in Table~\ref{tab3}, each of the groups from the Freudenthal magic square has a cohomological invariant of degree $2$, $3$, $4$ or $5$. These invariants measure whether the variety of parabolic subgroups of certain type (specified in Table~\ref{tab2}) has a zero-cycle of odd degree.

Table~\ref{tab2} contains the same groups from the Freudenthal magic square, but is independent of the Tits constructions. We list there general conditions on groups in motivic terms (column 3) and in equivalent algebraic terms (column 4), when they possess cohomological invariants $f_n\in H^n(F,\mu_2)$ of respective degrees. The column on the right contains the parabolic subgroup $P_\Theta$ giving a binary motive. The binary motive has dimension $2^{n-1}-1$, where $n$ is the degree of the respective cohomological invariant from the second column.

\newpage

\begin{longtable}{@{}c c@{}}
\caption{Freudenthal magic square and degrees of cohomological invariants of the respective groups}
\label{tab1}\\

\begin{tabular}{r|cccc}
  & $\A_1$ & $^2\A_2$ & $\C_3$ & $\F_4$ \\
\hline
base field                          & $\A_1$   & $^2\A_2$   & $\C_3$   & $\F_4$ \\
quadratic extension ($\mu_2$)       & $^2\A_2$ & $2\,^2\A_2$& $^2\A_5$ & $^2\E_6$ \\
quaternion algebra ($\A_1$)         & $\C_3$   & $^2\A_5$   & $^1\D_6$ & $\E_7$ \\
octonion algebra ($\G_2$)           & $\F_4$   & $^2\E_6$   & $\E_7$   & $\E_8$ \\
\end{tabular}
&
\begin{tabular}{c|cccc}
   & \multicolumn{4}{c}{ } \\
   \hline
   & 2 & 3 & 4 & 5 \\
   & 3 & 3 & 4 & 5 \\
   & 4 & 4 & 4 & 5 \\
   & 5 & 5 & 5 & 5 \\
\end{tabular}
\end{longtable}

\begin{longtable}{m{1.2cm}|m{1.2cm}|m{5cm}|m{5cm}|m{1.2cm}}
\caption{Cohomological invariants of algebraic groups from the Freudenthal magic square}\label{tab2} \\
 \centering Group    & Degree of $f_n$ & \centering Conditions  & Equivalent conditions up to odd degree field extensions &  $P_\Theta$ \\
  \hline
  $\A_1$ & $2$ & no conditions & no conditions & any  \\
  \hline
  $^2\A_2$ & $3$ & no conditions & no conditions & $P_{1,2}$  \\
  \hline
  $2\,{^2\A_2}$ & $3$ & $J=\underset{3\ \,3}{(1,0)}$ & the group is a product of two equal groups of type $^2\A_2$ & any  \\
 \hline
  $\C_3$ & $4$ & no conditions & no conditions & $P_{2}$  \\
  \hline
  $^2\A_5$ & $4$ & the group splits over a quadratic field extension and $J=\underset{2\ \,1\ \,3\ \,5}{(0,1,0,0)}$ & the group splits over a quadratic field extension and over the function field of its Tits algebra & $P_{1,5}$  \\
 \hline
  $^1\D_6$ & $4$ &  ${J=\underset{1\ \,1\ \,3\ \,5}{(0,1,0,0)}}$ & the group corresponds to an excellent $12$-dimensional quadratic form & $P_{1}$  \\
 \hline
 $\F_4$ & $5$ & no conditions & no conditions & $P_{4}$  \\
 \hline
 $^2\E_6$ & $5$ & the group splits over a quadratic field extension and $J=\underset{3\ \,5\ \,9}{(1,0,0)}$ & the group splits over a quadratic field extension and the mod $2$ component of the Rost invariant is a pure symbol & $P_{2}$  \\
 \hline
$\E_7$  & $5$ & $J=\underset{1\ \,3\ \,5\ \,9}{(1,0,0,0)}$ & the group splits over the function field of its Tits algebra & $P_1$ \\

 \hline
   $\E_8$  & $5$ & $J=\underset{3\ \,5\ \,9\ \,15}{(0,0,0,1)}$ & the even component of the Rost invariant is $0$ & any \\
\end{longtable}

\newpage

\begin{longtable}{c|c|l}
\caption{Conditions on the Tits constructions}\label{tab3} \\
Group     &  Tits construction & Conditions \\
\hline
$2\,{^2\A_2}$   & $\mu_2\times{^2\A_2}$  & scalar is equal\footnote{Every $9$-dimensional Freudenthal algebra has a cohomological invariant of degree $1$ modulo $2$, which is denoted by ${f_1\in H^1(F,\mu_2)}$.} to $f_1$\\
& (scalar\footnote{This scalar is an element in $H^1(F,\mu_2)=F^\times/F^{\times 2}$.} $\times$ $9$-dim Freudenthal algebra) & \\
\hline
$^2\A_5$   & $\mu_2\times\C_3$  & scalar divides\footnote{Every $15$-dimensional Freudenthal algebra has a cohomological invariant of degree $2$ modulo $2$, which is denoted by ${f_2\in H^2(F,\mu_2)}$. This invariant is a pure symbol.} $f_2$ \\
& (scalar $\times$ $15$-dim Freudenthal algebra) & \\
\hline
$^2\A_5$   & $\A_1\times{^2\A_2}$  & quaternions are divisible by $f_1$ \\
& (quaternions $\times$ $9$-dim Freudenthal algebra) & \\
\hline
$^1\D_6$   & $\A_1\times\C_3$  & quaternions divide\footnote{Quaternion algebras correspond pure symbols in $H^2(F,\mu_2)$, where $f_2$ lies. In particular, one can replace ``divide'' by ``equal''.} $f_2$\\
& (quaternions $\times$ $15$-dim Freudenthal algebra) & \\
\hline
$^2\E_6$   & $\mu_2\times\F_4$  & scalar divides\footnote{Every Albert algebra has a cohomological invariant of degree $3$ modulo $2$, which is denoted by ${f_3\in H^3(F,\mu_2)}$. This invariant is a pure symbol.} $f_3$ \\
& (scalar $\times$ Albert) & \\
\hline
$^2\E_6$   & $\G_2\times{^2\A_2}$  & octonions are divisible by $f_1$ \\
& (octonions $\times$ $9$-dim Freudenthal algebra) & \\
\hline
$\E_7$   & $\A_1\times\F_4$ & quaternions divide $f_3$ \\
&  (quaternions $\times$ Albert) & \\
\hline
$\E_7$   & $\C_3\times\G_2$  & $f_2$ divides octonions \\
& ($15$-dim Freudenthal algebra $\times$ octonions) & \\
\hline
  $\E_8$   & $\G_2\times\F_4$  & octonions divide\footnote{Octonion algebras correspond to pure symbols in $H^3(F,\mu_2)$, where $f_3$ lies. In particular, one can replace ``divide'' by ``equal''.} $f_3$\\
  & (octonions $\times$ Albert) & \\
\end{longtable}

\noindent
{\it Comments on Tables~\ref{tab2} and \ref{tab3}}:

\begin{enumerate}[label={\arabic*)},leftmargin=0.6cm]
\item Groups of type $\A_1$ correspond to quaternion algebras, and, thus, have a degree $2$ invariant (the class of the quaternion algebra in the Brauer group of the base field).

\item Groups of type $^2\A_2$ correspond to unitary involutions on central simple algebras of degree $3$ over a quadratic field extension $K$ of the base field $F$. In the present article we investigate cohomological invariants and motives with coefficients in $\zz/2$. Therefore, we can assume that the respective algebra of degree $3$ is split.

In this case groups of type $^2\A_2$ correspond
to $3$-dimensional Hermitian forms. Such forms correspond to $6$-dimensional quadratic forms over $F$, which are divisible by a binary quadratic form given by the quadratic extension $K/F$. It is well known that all such quadratic forms are excellent and have a cohomological invariant of degree $3$.

\item Cohomological invariants modulo $2$ of adjoint groups of type $\C_3$ and more generally of type $\C_n$ with an odd $n$ are well known (see \cite{MD08}). Groups of type $\C_3$ have cohomological invariants $f_2\in H^2(F,\mu_2)$ and $f_4\in H^4(F,\mu_2)$.

\item Groups of type $^2\A_5$ correspond to unitary involutions on central simple algebras of degree $6$ over a quadratic field extension $K$ of the base field $F$. If we assume that our group of type $^2\A_5$ splits over a quadratic field extension of $F$, then it corresponds to a $6$-dimensional Hermitian form on a vector space, which, in turn, corresponds to a $12$-dimensional quadratic form over $F$ divisible by a binary quadratic form given by the quadratic extension $K/F$.

\item It is well known that every Freudenthal algebra of dimension $3+3\cdot 2^i$ (for $i=1$, $2$, $3$) has cohomological invariants $f_i\in H^i(F,\mu_2)$ and $f_{i+2}\in H^{i+2}(F,\mu_2)$ (see \cite[Theorem~40.2]{KMRT}). These algebras correspond to groups of type $\A_2$, $\C_3$ and $\F_4$ resp.

\item The motives of varieties of parabolic subgroups of type $4$ of groups of type $\F_4$ modulo $2$ are computed in \cite[Theorem~3.12]{MD09}. The method of the proof of MacDonald is very elegant and is implicitly related to Cartan's symmetric spaces in the following way.

Taking the symmetric space FII one can apply the algorithm described in \cite[Section~2.5]{GP24} to the variety $X$ of parabolic subgroups of type $4$ of the group of type $\F_4$. It follows that $X$ contains an open subvariety, which is a twisted form of $\Spin_9/\Spin_7$. The latter variety is a $15$-dimensional affine quadric due to a famous exceptional isomorphism. MacDonald indeed shows that the variety $X$ is birationally isomorphic to a $15$-dimensional norm quadric, describes the birational isomorphism explicitly and uses it to decompose the motive of $X$.

\item The degree $5$ invariant and the respective binary motive are constructed in \cite[Theorem~0.2]{GaPe07} and in Proposition~\ref{t211pro}  for groups of type $^2\E_6$, in \cite[Chapter~12]{He23} for groups of type $\E_7$ (see also Proposition~\ref{prop61}), and in \cite[Theorem~8.7]{S16} for groups of type $\E_8$ (see also \cite[Theorem~3.7]{GS10}). Note also that in all cases but $\E_8$ the respective binary motive is a direct summand of a twisted flag variety, which is not generically split or generically quasi-split.
\end{enumerate}

\end{document}